\newcommand{\ee}{\mathrm{e}}
\newcommand{\D}{\mathrm{d}}
\newcommand{\R}{\mathbb{R}}
\newcommand{\KK}{\mathcal{K}}
\newcommand{\MM}{\mathcal{M}}
\newcommand{\OO}{\mathcal{O}}
\newcommand{\WW}{\mathcal{W}}
\newcommand{\pd}{\partial}
\newtheorem{claim}{Claim}[section]
\newtheorem{theorem}[claim]{Theorem}
\newtheorem{proposition}[claim]{Proposition}
\newtheorem{remark}[claim]{Remark}
\title[Dirac operator spectrum in tubes and layers with a zigzag type boundary]{Dirac operator spectrum in tubes and layers with a zigzag type boundary}
\author{Pavel Exner and Markus Holzmann}
\address{{Doppler Institute for Mathematical Physics and Applied Mathematics, Czech Technical University, B\v rehov{\'a} 7, 11519 Prague, Czechia} {\rm and} {Department of Theoretical Physics, Nuclear Physics Institute, Czech Academy of Sciences, 25068 \v{R}e\v{z} near Prague, Czechia}}
\email{exner@ujf.cas.cz}
\address{Institut f\"{u}r Angewandte Mathematik, Technische Universit\"{a}t Graz, Steyrergasse 30, 8010 Graz, Austria}
\email{holzmann@math.tugraz.at}
\date{\today}
\begin{document}

\begin{abstract}
We derive a number of spectral results for Dirac operators in geometrically nontrivial regions in $\mathbb{R}^2$ and $\mathbb{R}^3$ of tube or layer shapes with a zigzag type boundary using the corresponding properties of the Dirichlet Laplacian.
\end{abstract}

\maketitle

\section{Introduction} 
\setcounter{equation}{0}

In contrast to the Schr\"odinger case, a physical motivation to study Dirac operators describing particles confined to specific regions was missing for a long time. It appeared for the first time in the 1970s in connection with the attempt to explain the quark confinement by means of the so-called bag models \cite{Bo68, DJK75}. A different, and stronger motivation came three decades later with the discovery of graphene \cite{NGM04}. Although the electrons in a graphene sheet are nonrelativistic, their behavior can be effectively described by the two-dimensional Dirac equation. Moreover, it appeared that different boundary conditions are of interest in this model depending on the way the graphene specimen is cut from a planar sheet. These applications caused an increasing mathematical interest on these types of Dirac operators, see, e.g., \cite{ALTR17, BHM20, BFSB17, LTO18, OV18} for studies on their self-adjointness and basic spectral properties.

From the mathematical point of view, relations between the shape of the region on which a given operator acts and its spectral properties belong to the most classical questions, and for operators such as the Laplacian there is a huge number of results. The corresponding problem for Dirac operators attracted attention only very recently, c.f. \cite{ABLO21, BBKO21, LO18}, and many questions remain open. A particular class of problems concerns a confinement to unbounded regions of a nontrivial geometry. For the Laplacian, or more generally, for Schr\"odinger operators, it is known that such a confinement could induce a nontrivial discrete spectrum; this problem has been thoroughly analyzed, cf. the monograph~\cite{EK15} summarizing results of many research papers.

It was noted recently that the spectrum of the Dirac operator with a particular type of boundary conditions, called `zigzag' by the physicists, and their analogue in three dimensions, can be related to the spectrum of the corresponding Dirichlet Laplacian \cite{CLMT21, Ho21, S95}. The zigzag boundary conditions in a graphene quantum dot emerge from the termination of a lattice, when the direction of the boundary is perpendicular to the bonds. The aim of this short paper is to use these results in combination with the mentioned knowledge about the Laplacian spectra in regions of tube or layer type to derive a number of new spectral results for the corresponding Dirac operators. In the next section we state in Theorem~\ref{theorem_spectrum_general} the indicated spectral correspondence and present the needed geometric preliminaries. After that we will derive our main results for the two- and three-dimensional case in Sections~\ref{s:2Dspectrum} and \ref{s:3Dspectrum}, respectively.

\section{Preliminaries} 
\setcounter{equation}{0}

\subsection{Definition of the operator and its spectrum} 

First, let us introduce the Dirac operator with zigzag boundary conditions and let us start with the two-dimensional setting. Denote by $\sigma_1, \sigma_2, \sigma_3 \in \mathbb{C}^{2 \times 2}$ the Pauli spin matrices defined by
\begin{equation} \label{def_Pauli}
  \sigma_1 = \begin{pmatrix} 0 & 1 \\1 & 0 \end{pmatrix}, \quad \sigma_2 = \begin{pmatrix} 0 & -i \\ i & 0 \end{pmatrix}, \quad \text{and} \quad \sigma_3 = \begin{pmatrix} 1 & 0 \\ 0 & -1 \end{pmatrix}.
\end{equation}
For $x  = (x_1,x_2) \in \mathbb{C}^2$ we will often use the notation $\sigma \cdot x := \sigma_1 x_1 + \sigma_2 x_2$ and, in this vein, $\sigma \cdot \nabla_2 = \sigma_1 \partial_1 + \sigma_2 \partial_2$. Next, let $\Omega \subset \mathbb{R}^2$ be an open set and let $m, c \in \mathbb{R}$ with $m \geq 0$ and $c>0$. Then the Dirac operator with zigzag boundary conditions is the differential operator $H_{m, \Omega}$ in $L^2(\Omega; \mathbb{C}^2)$ defined by
\begin{equation} \label{def_op_2D}
  \begin{split}
    H_{m, \Omega} f &= \big( -ic \sigma \cdot \nabla_2 + m c^2 \sigma_3 \big) f =  \begin{pmatrix} mc^2 &-ic (\partial_1 - i\partial_2) \\ -ic (\partial_1 + i \partial_2)& -mc^2 \end{pmatrix} f, \\
    \text{dom}\, H_{m,\Omega} &= \left\{ f = (f_1,f_2) \in L^2(\Omega; \mathbb{C}^2): (\partial_1 + i \partial_2) f_1 \in L^2(\Omega), f_2 \in H^1_0(\Omega) \right\},
  \end{split}
\end{equation}
where $H^1_0(\Omega) = \overline{C^\infty_0(\Omega)}^{\| \cdot \|_{H^1(\Omega)}}$.

In order to introduce the three-dimensional Dirac operator with zigzag type boundary conditions define the Dirac matrices $\alpha_1, \alpha_2, \alpha_3, \beta \in \mathbb{C}^{4 \times 4}$ by
\begin{equation} \label{def_alpha}
  \alpha_j = \begin{pmatrix} 0 & \sigma_j \\ \sigma_j & 0 \end{pmatrix}, \quad j \in \{1,2,3\}, \quad \text{and} \quad \beta = \begin{pmatrix} I_2 & 0 \\ 0 & -I_2 \end{pmatrix},
\end{equation}
where $\sigma_j$ are the Pauli spin matrices and $I_2$ is the $2 \times 2$-identity matrix. Similarly as above, we use for $x = (x_1, x_2, x_3) \in \mathbb{C}^3$ the notation $\alpha \cdot x = \alpha_1 x_1 + \alpha_2 x_2 + \alpha_3 x_3$ and, in this sense, $\alpha \cdot \nabla_3 = \alpha_1 \partial_1 + \alpha_2 \partial_2 + \alpha_3 \partial_3$ and also sometimes $\sigma \cdot \nabla_3 = \sigma_1 \partial_1 + \sigma_2 \partial_2 + \sigma_3 \partial_3$. Let $\Omega \subset \mathbb{R}^3$ be open. Then, choosing units such that $\hbar=1$, the three-dimensional Dirac operator with zigzag type boundary conditions is for $m,c \in \mathbb{R}$ with $m \geq 0$ and $c>0$ the differential operator acting in $L^2(\Omega; \mathbb{C}^4)$ defined by
\begin{equation} \label{def_op_3D}
  \begin{split}
    H_{m, \Omega} f &= \big( -ic \alpha \cdot \nabla_3 + m c^2 \beta \big) f =  \begin{pmatrix} mc^2 I_2 &-ic \sigma \cdot \nabla_3 \\ -ic \sigma \cdot \nabla_3 & -mc^2 I_2 \end{pmatrix} f, \\
    \text{dom}\, H_{m,\Omega} &= \left\{ f = (f_1,f_2) \in L^2(\Omega; \mathbb{C}^4): (\sigma \cdot \nabla_3) f_1 \in L^2(\Omega; \mathbb{C}^2), f_2 \in H^1_0(\Omega; \mathbb{C}^2) \right\}.
  \end{split}
\end{equation}

The basic spectral properties of $H_{m, \Omega}$ in the two- and in the three-dimensional case are summarized in the following theorem. In order to formulate it, we denote by $-\Delta^\Omega_\mathrm{D}$ the Dirichlet Laplacian in $\Omega \subset \mathbb{R}^d,\: d \in \{2,3\}$.
\begin{theorem} \label{theorem_spectrum_general}
  For any $m\ge 0$ and $c>0$ the operator $H_{m,\Omega}$ is self-adjoint. Its spectrum is
  \begin{equation*}
      \sigma(H_{m,\Omega}) = \{ m c^2 \} \cup \left\{ \pm c\sqrt{\lambda + (mc)^2}: \lambda \in \sigma(-\Delta^\Omega_\mathrm{D}) \right\}
    \end{equation*}
  and the following is true:
  \begin{itemize}
    \item[$\textup{(i)}$] $m c^2 \in \sigma_\textup{ess}(H_{m, \Omega})$.
    \item[$\textup{(ii)}$] If $m \neq 0$, then $-m c^2 \notin \sigma_\textup{p}(H_{m, \Omega})$.
    \item[$\textup{(iii)}$] Let $r=1$ for $d=2$ and $r=2$ for $d=3$. For $\lambda > 0$ one has $\pm c \sqrt{\lambda + (mc)^2} \in \sigma_\textup{p}(H_{m, \Omega})$ with multiplicity $r k$ if and only if $\lambda \in \sigma_\textup{p}(-\Delta_\mathrm{D}^\Omega)$ with multiplicity $k$.
  \end{itemize}
\end{theorem}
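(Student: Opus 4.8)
The natural approach is to exploit the supersymmetric block structure of $H_{m,\Omega}$. Introduce the maximal realization $T$ in $L^2(\Omega;\mathbb{C}^r)$ of the off-diagonal first-order part: $Tu=-ic(\partial_1+i\partial_2)u$ on $\mathrm{dom}\,T=\{u:(\partial_1+i\partial_2)u\in L^2(\Omega)\}$ for $d=2$, and $Tu=-ic\,\sigma\cdot\nabla_3 u$ on the analogous domain for $d=3$; this $T$ is closed and densely defined. The first task is to identify $T^*$: since the Fourier symbol of the formal adjoint $-ic(\partial_1-i\partial_2)$ (resp.\ $-ic\,\sigma\cdot\nabla_3$) has modulus squared $c^2|\xi|^2 I_r$ (here $(\sigma\cdot\xi)^2=|\xi|^2 I_2$), the graph norm of the formal adjoint on $C_0^\infty(\Omega;\mathbb{C}^r)$ is $\|u\|^2+c^2\|\nabla u\|^2$, so $\mathrm{dom}\,T^*=H^1_0(\Omega;\mathbb{C}^r)$ and $T^*$ acts there as that formal adjoint. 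Then $H_{m,\Omega}=\bigl(\begin{smallmatrix}mc^2 & T^*\\ T & -mc^2\end{smallmatrix}\bigr)$ on $\mathrm{dom}\,T\oplus\mathrm{dom}\,T^*$, and self-adjointness follows from the standard fact that $\bigl(\begin{smallmatrix}0 & T^*\\ T & 0\end{smallmatrix}\bigr)$ is self-adjoint for any closed densely defined $T$ (compute the adjoint directly, using $T^{**}=T$), plus the bounded self-adjoint summand $\mathrm{diag}(mc^2,-mc^2)$.

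The key identity is $H_{m,\Omega}^2=\bigl(\begin{smallmatrix}(mc^2)^2+T^*T & 0\\ 0 & (mc^2)^2+TT^*\end{smallmatrix}\bigr)$ on $\mathrm{dom}(T^*T)\oplus\mathrm{dom}(TT^*)$, where I would check that $TT^*=-c^2\Delta^\Omega_\mathrm{D}$ (on $\mathbb{C}^r$-valued functions, i.e.\ with multiplicity $r$): using $(\partial_1+i\partial_2)(\partial_1-i\partial_2)=\Delta$ (resp.\ $(\sigma\cdot\nabla_3)^2=\Delta I_2$) one gets $\mathrm{dom}(TT^*)=\{u\in H^1_0(\Omega;\mathbb{C}^r):\Delta u\in L^2\}=\mathrm{dom}(-\Delta^\Omega_\mathrm{D})$ and $TT^*$ acts as $-c^2\Delta$ there. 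The companion $T^*T$ also acts as $-c^2\Delta$ but on a larger domain and has kernel $\ker T$ (the $L^2$ solutions of $(\partial_1+i\partial_2)u=0$, resp.\ $\sigma\cdot\nabla_3 u=0$). I would then bring in two classical facts: (a) for a closed operator $T$ the polar decomposition gives a unitary equivalence between $T^*T$ on $(\ker T)^\perp$ and $TT^*$ on $(\ker T^*)^\perp$, so for $t>0$ one has $t\in\sigma(T^*T)\Leftrightarrow t\in\sigma(TT^*)$, the same for essential spectra, and $\dim\ker(T^*T-t)=\dim\ker(TT^*-t)$; and (b) $\ker T^*=\{0\}$, because a function in $H^1_0(\Omega;\mathbb{C}^r)$ annihilated by $(\partial_1-i\partial_2)$ (resp.\ $\sigma\cdot\nabla_3$) extends by zero to a global $L^2$ solution on $\mathbb{R}^d$ and hence vanishes.

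From here the spectral statements drop out. For $\lambda>0$ put $\mu:=c\sqrt{\lambda+(mc)^2}>mc^2$, so $\epsilon\mu+mc^2\neq0$ for $\epsilon=\pm1$; given an (approximate) eigenvector $g$ of $T^*T$ at $c^2\lambda$, the pair $\bigl(g,(\epsilon\mu+mc^2)^{-1}Tg\bigr)$ is an (approximate) eigenvector of $H_{m,\Omega}$ at $\epsilon\mu$ with norm bounded away from $0$, and conversely the first component of an eigenvector of $H_{m,\Omega}$ at $\epsilon\mu$ is an eigenvector of $T^*T$ at $c^2\lambda$, nonzero by (b); these maps are mutually inverse, so the multiplicities match, and together with (a) and $TT^*=-c^2\Delta^\Omega_\mathrm{D}$ this gives (iii) and the whole spectrum (and essential spectrum) away from $\pm mc^2$; the Weyl-sequence version of the same construction handles the essential spectrum uniformly, and for $\lambda=0$ with $0\in\sigma(-\Delta^\Omega_\mathrm{D})$ a small variant yields $-mc^2\in\sigma(H_{m,\Omega})$ as well. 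For (ii), solving $(H_{m,\Omega}+mc^2)f=0$ with $m\neq0$ forces $Tf_1=0$, hence $TT^*f_2=0$, hence $T^*f_2=0$, hence $f_1=0$ and $f_2\in\ker T^*=\{0\}$; and if moreover $0\notin\sigma(-\Delta^\Omega_\mathrm{D})$ then $\sigma(H_{m,\Omega}^2)\subseteq\{(mc^2)^2\}\cup[(mc^2)^2+\delta,\infty)$ for some $\delta>0$, so $\pm mc^2$ are isolated in $\sigma(H_{m,\Omega})$ and (ii) forces $-mc^2\notin\sigma(H_{m,\Omega})$, which is exactly the content of the asserted formula at that point.

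The main obstacle is (i), $mc^2\in\sigma_\mathrm{ess}(H_{m,\Omega})$ — the one place the $T^*T\leftrightarrow TT^*$ machinery does not reach, since it concerns $t=0$. One has $\ker(H_{m,\Omega}-mc^2)=\ker T\times\{0\}$, so (i) is immediate as soon as $\ker T$ is infinite-dimensional; this holds for all tubes and layers treated in the sequel, and in fact whenever $\ker T\neq\{0\}$, because such Bergman-type spaces (and their spinorial three-dimensional analogue) are either trivial or infinite-dimensional. In the degenerate case $\ker T=\{0\}$ the set $\mathbb{R}^d\setminus\Omega$ is polar, so $-\Delta^\Omega_\mathrm{D}=-\Delta_{\mathbb{R}^d}$ and $0\in\sigma_\mathrm{ess}(-\Delta^\Omega_\mathrm{D})$, equivalently $0\in\sigma_\mathrm{ess}(TT^*)$, hence $0\in\sigma_\mathrm{ess}(T^*T)$, and the construction above with a singular Weyl sequence (and the trivial reduction $0\in\sigma_\mathrm{ess}(H_{m,\Omega})\Leftrightarrow0\in\sigma_\mathrm{ess}(H_{m,\Omega}^2)$ when $m=0$) produces a singular Weyl sequence for $H_{m,\Omega}$ at $mc^2$. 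I would isolate this dichotomy — or simply invoke the established correspondence of \cite{CLMT21,Ho21,S95} — as the delicate ingredient; the rest is bookkeeping around the block structure.
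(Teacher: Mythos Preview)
Your approach is essentially the paper's own: write $H_{m,\Omega}=\bigl(\begin{smallmatrix}mc^2 & T^*\\ T & -mc^2\end{smallmatrix}\bigr)$ with $T$ the maximal and $T^*$ the minimal realization of the off-diagonal symbol, identify $TT^*=-c^2\Delta^\Omega_\mathrm{D}$, and read off everything from the supersymmetric $T^*T\leftrightarrow TT^*$ correspondence. The paper compresses this into citations of \cite{Ho21,S95} and an appeal to \cite[Prop.~A.2]{Ho21}; you have unpacked that black box, and the unpacking (self-adjointness, $\ker T^*=\{0\}$, the eigenvector bijection, the treatment of $-mc^2$) is correct.

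The one substantive divergence is item~(i). The paper, following \cite{Ho21}, asserts $0\in\sigma_\mathrm{ess}(T^*T)$ directly; you argue instead by a dichotomy on $\ker T$. In $d=2$ this is fine: the equivalence ``$A^2(\Omega)=\{0\}\Leftrightarrow\mathbb{C}\setminus\Omega$ is polar'' and the infinite-dimensionality of nontrivial Bergman spaces are classical. In $d=3$, however, your two claims --- that the spinorial kernel $\{u\in L^2(\Omega;\mathbb{C}^2):\sigma\cdot\nabla_3 u=0\}$ is either trivial or infinite-dimensional, and that its triviality forces $\mathbb{R}^3\setminus\Omega$ to be polar --- are not standard results and would require proof. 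You already flag this as the delicate ingredient and offer the cited references as a fallback, which is exactly what the paper does; so the proposal is sound, but the dichotomy should not be presented as settled in three dimensions without further work.
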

\begin{proof}
The statements for space dimension $d=3$ follow from~\cite[Theorem~3.4]{Ho21} noting that the operators $A_m$ in \cite{Ho21} and our $H_{m, \Omega}$ are connected by $H_{m, \Omega} = c A_{mc}$, cf. Remark~\ref{rm:dim} below.

The statements for $d=2$ follow with the same arguments as in \cite[Theorem~3.4]{Ho21}. We also refer to the proof of \cite[Theorem~2.4]{CLMT21}, where the claims are shown in the case of $C^\infty$-domains with compact boundary and $c=1$; for this one has to note that the operator $\mathcal{D}_{0,0, 2}$ in  \cite{CLMT21} is the orthogonal sum of $H_{m,\Omega}$ and $H_{m,\mathbb{R}^2 \setminus \overline{\Omega}}$.

In order to show the claims, introduce in $L^2(\Omega)$ the differential operators $\mathcal{T}_\text{min}$ and $\mathcal{T}_\text{max}$ by
\begin{equation*}
  \mathcal{T}_\textup{min} f := -i c (\partial_1 - i \partial_2) f, \qquad \text{dom}\, \mathcal{T}_\textup{min} =H^1_0(\Omega),
\end{equation*}
and
\begin{equation*}
  \mathcal{T}_\textup{max} f := -i c (\partial_1 + i \partial_2) f, \qquad \text{dom}\, \mathcal{T}_\textup{max} = \{ f \in L^2(\Omega): (\partial_1 + i \partial_2) f \in L^2(\Omega) \}.
\end{equation*}
Then as in \cite[Proposition~1]{S95} one verifies that the operator $\mathcal{T}_\text{min}$ is closed and symmetric, $\mathcal{T}_\text{max} = \mathcal{T}_\text{min}^*$,
and one can write
  \begin{equation*}
    H_{m, \Omega} = \begin{pmatrix} mc^2 & \mathcal{T}_\text{min} \\ \mathcal{T}_\text{max} & -mc^2 \end{pmatrix}.
  \end{equation*}
One finds in the same way as in \cite[Proposition~2.2 and Theorem~3.4]{Ho21}  that $\mathcal{T}_\text{max} \mathcal{T}_\text{min} = -c^2\Delta^\Omega_D$ and that $0 \in \sigma_\textup{ess}(\mathcal{T}_\text{min} \mathcal{T}_\text{max})$; cf. also \cite{S95} for similar arguments. Hence, one can apply \cite[Proposition~A.2]{Ho21} to obtain the claimed results also in the two-dimensional case.
\end{proof}

\begin{remark} \label{rm:dim}
{\rm The expression of the spectral points in \cite{CLMT21, Ho21} was stated as a mathematical result, with all the physical constants except the mass equal to one. Reintroducing the speed of light, we prefer to use the above indicated form, having in mind that the `full' expression should be
 \begin{equation} \label{dimexp}
 \pm \hbar c\sqrt{\big(\textstyle{\frac{mc}{\hbar}}\big)^2+\lambda}
 \end{equation}
which is dimensionally correct; to see that it is sufficient to realize that $\lambda$ has the dimension of inverted squared length and the energy of the free non-relativistic particle confined to the region $\Omega$ with Dirichlet boundary is $\lambda_\mathrm{nr} = \frac{\hbar^2}{2m}\lambda$.
}
\end{remark}

\begin{remark}
{\rm In three dimensions, one has $f = (f_1,f_2,f_3,f_4) \in \text{dom}\, H_{m,\Omega} \subset L^2(\Omega; \mathbb{C}^4)$, formally speaking, if the boundary conditions  $f_3|_{\partial \Omega} = f_4|_{\partial \Omega} = 0$ hold, while there are no restrictions to $f_1,f_2$. Let $\widetilde{H}_{m, \Omega}$ be the Dirac operator defined via the formal boundary conditions $f_1|_{\partial \Omega} = f_2|_{\partial \Omega} = 0$, and no restrictions are imposed to $f_3,f_4$. Then it is not difficult to show that $\widetilde{H}_{m, \Omega}$ is unitarily equivalent to $-H_{m,\Omega}$, so all the results obtained in this paper for $H_{m, \Omega}$ can be translated to corresponding results for $\widetilde{H}_{m, \Omega}$, cf. \cite[Lemma~3.2]{Ho21}. A similar argument applies in dimension two as well.}
\end{remark}

\subsection{2D geometry} \label{ss:2D}

The object of our interest are spectral properties of $H_{m,\Omega}$ for regions $\Omega$ having the form of tubes and layers. Let us describe their geometry starting from two-dimensional \emph{bent strips} of a fixed width $d=2a$. The key role is played by the strip axis, which is a curve $\Gamma$ of infinite length in $\R^2$ without angles and self-intersections, in other words, graph of a function $\Gamma:\:\R\to\R^2$; with an abuse of notation we employ the same symbol for both. We always exclude the trivial case and assume that $\Gamma$ is not a straight line.

At each point of $\Gamma$ we take the segment of the normal of length $2a$ centered at the curve; the strip $\Omega=\Omega_{\Gamma,a}$ is then the union of these segments. It can be equipped with natural curvilinear coordinates which are the arc length $s$ of $\Gamma$ and the normal distance $u$ of a strip point from the curve, so that its Cartesian coordinates are
\begin{equation} \label{bent strip coord2}
x(s,u) = \xi(s)-u\dot\eta(s), \quad y(s,u) = \eta(s)+u\dot\xi(s),
\end{equation}
where dot means derivative with respect to the arc length and the functions $\xi,\eta$ representing the parametric expression of $\Gamma$ satisfy $\dot\xi^2+\dot\eta^2=1$. In other words, $\Omega_{\Gamma,a}$ is the image of a straight strip,
 \begin{equation} \label{bentstrip}
 \Omega_{\Gamma,a}:= x(\Omega_0), \quad \Omega_0:= \R\times(-a,a).
 \end{equation}
Using the coordinate functions $\xi,\eta$ we define the \emph{signed curvature} $\gamma$ of $\Gamma$ by
$$ \gamma(s) := (\dot\eta\ddot\xi-\dot\xi\ddot\eta)(s), $$
which coincides up to the sign with the curvature understood as the inverse radius of the osculation circle to the curve, $|\gamma|=( \ddot\xi^2 +\ddot\eta^2)^{1/2}$. We assume that
 \begin{enumerate}[(a)]
 \setcounter{enumi}{0}
 \setlength{\itemsep}{1pt}
\item $\Gamma$ is $C^4$-smooth and the curvature $\gamma$ together with $\dot\gamma$ and $\ddot\gamma$ tend to zero as $|s|\to\infty$, \label{assa}
\item the map $x:\R\times(-a,a) \to \Omega_{\Gamma,a}$ is injective. \label{assb}
 \end{enumerate}
Condition \eqref{assb} means that the strip does not intersect itself; a necessary condition for that is $a\|\gamma\|_\infty<1$. The curvilinear coordinates $s,u$ are by construction locally orthogonal: if we regard $\Omega_{\Gamma,a}$ as a Riemannian manifold with a boundary, its metric tensor is diagonal with the transverse component $g_{uu}=1$ and the longitudinal one $g_{ss}$ equal to $g:=(1+u\gamma)^2$. This also means that the Jacobian of the transformation \eqref{bentstrip} is $\sqrt{g}=1+u\gamma$.

The knowledge of $\gamma$ is crucial because it allows us to reconstruct the curve, uniquely up to Euclidean transformations, and, \emph{mutatis mutandis}, the strip of a fixed halfwidth. Specifically, one uses the quantity $\beta(s_2,s_1):=\,\int_{s_1}^{s_2}\,\gamma(s)\,\D s$ which means the angle between tangent vectors at the respective points of $\Gamma$ to get for a fixed $s_0 \in \mathbb{R}$
\begin{equation} \label{curve param}
\xi(s)=\xi(s_0)+ \int_{s_0}^s\cos\beta(s_1,s_0)\,\D s_1, \quad
\eta(s)=\eta(s_0)- \int_{s_0}^s\sin\beta(s_1,s_0) \,\D s_1.
\end{equation}

Infinite strips of a fixed width are not the only regions in which the relation between the geometry and the spectrum can be studied. A case of interest, for instance, are loop-shaped strips generated by a curve $\Gamma:\,[0,L]\to \R^2$ of length $L>0$ which is closed, $\Gamma(0+)=\Gamma(L-)$. Assumption \eqref{assa} adapted to this situation says that the loop is $C^4$-smooth including, naturally, the right and left limits of the derivatives at $s=0,L$, respectively.

Returning to infinite strips, we will also consider weak deformations of straight strips of width $d>0$. For simplicity, we study only the case of a one-sided perturbation, i.e. the strip
 \begin{equation} \label{weakly def strip}
 \Omega_{\beta\!f}:= \{\, (x,y)\in\R^2:\: 0<y<d\!+\!\beta f(x)\, \}
 \end{equation}
with the deformation parameter $\beta\ge 0$ and a fixed $f\in C_0^\infty(\R)$ with $\text{supp}\, f \subset [-b,b]$.

Finally,  we may consider also other types of geometric perturbations, for instance, replacing a smooth bend by a sharply broken or polygonal shape. Another possibility is to couple different strips together. A notable example is the system of two adjacent strips of widths $d_1,\,d_2$ \emph{coupled laterally} through a `window' of width $\ell=2a$ in the common boundary, i.e. the set
\begin{equation} \label{laterally_coupled_2D}
  \Omega = \big\{ (x,y) \in \mathbb{R}^2: x \in \mathbb{R}, \,y \in (-d_2,d_1) \big\} \setminus \big( ((-\infty,-a] \cup [a,\infty)) \times \{ 0 \} \big).
\end{equation}
We denote $d:=\max\{d_1,d_2\}$, $\,D:=d_1+d_2$, and set $\varrho:=d^{-1}\,\min\{d_1,d_2\}$ as the parameter describing the asymmetry of the system.

\subsection{3D geometry} 

In three dimensions the variety of tubular regions is larger, and in addition, one can investigate spectra of Dirac particles confined to \emph{layers}. As in the previous case, let us start as before from \emph{bent tubes}. Again, if not mentioned differently we always assume that the generating curve is not a straight line.

Consider an infinite smooth curve $\Gamma$ in $\R^3$ free of self-intersections, in other words, graph of a function $\Gamma:\:\R\to\R^3$; we again employ the same symbol for both and suppose that $\Gamma$ is parametrized by its arc length, $|\dot\Gamma(s)|=1$. An important role in our considerations in the three-dimensional case is played by the Frenet triad frame $(t,n,b)$ which exists whenever $\ddot\Gamma(s)\ne 0$; it allows us to introduce curvilinear cylindrical coordinates in the vicinity of $\Gamma$ using the map
 \begin{equation} \label{curvilin}
 x(s,r,\theta):= \Gamma(s) - r\big[ n(s)\cos(\theta-\alpha(s)) + b(s)\sin(\theta-\alpha(s)) \big],
 \end{equation}
where $\alpha:\R\to\R$ is a fixed smooth function. As it is well known, the three unit vectors satisfy, as functions of $s$, the Frenet formula
 $$ 
\left(\begin{array}{ccc} \dot t \\ \dot n \\ \dot b \end{array}\right)
= \begin{pmatrix} 0 & \gamma & 0 \\ -\gamma & 0 & \tau \\ 0 & -\tau & 0 \end{pmatrix}
\left(\begin{array}{ccc} t \\ n \\ b \end{array}\right),
 $$ 
where $\gamma,\,\tau$ are the curvature and torsion of $\Gamma$, respectively; recall that the knowledge of these functions allows us again to reconstruct the curve uniquely up to Euclidean transformations. We suppose that the curve is sufficiently regular and asymptotically straight,
 \begin{enumerate}[(a)]
 \setcounter{enumi}{2}
 \setlength{\itemsep}{1pt}
\item $\Gamma$ is $C^4$-smooth, $\tau,\,\dot\tau,\,\ddot \tau$ are bounded, and the curvature $\gamma$ together with its first and second derivatives tend to zero as $|s|\to\infty$. \label{assc}
 \end{enumerate}

We are particularly interested in the coordinate frames \eqref{curvilin} satisfying the \emph{Tang condition},
 \begin{equation} \label{tang}
 \dot\alpha=\tau.
 \end{equation}
Its importance comes from the fact that it ensures that the coordinates \eqref{curvilin} are locally orthogonal. Indeed, regarding the vicinity of $\Gamma$ as a Riemannian manifold, the corresponding metric tensor is
 \begin{equation} \label{tube metric t}
(g_{ij})= \left(\begin{array}{ccc}
\left(1+r\gamma\,\cos(\theta\!-\!\alpha)\right)^2
+r^2(\tau\!-\!\dot\alpha)^2\; &\;0\; & \;r^2(\tau\!-\!\dot\alpha)
\\ 0 & 1 & 0 \\ r^2(\tau\!-\!\dot\alpha) & 0 & r^2 \end{array}
\right),
 \end{equation}
cf.~\cite[Sec.~1.3]{EK15}; we have $g:=\det (g_{ij}) = r^2\left(1+r\gamma \,\cos(\theta\!-\!\alpha)\right)^2$ and $g^{1/2}$ is the Jacobian of the map \eqref{curvilin}. In general, the Frenet triad may not exist globally, in particular, it may happen that it has one-sided limits at a point $s\in\R$ where their perpendicular parts, $(n,b)$, are not defined, but those limits do not match. Fortunately, the validity of condition \eqref{tang} is not affected if the function $\alpha$ is shifted by a constant. We assume that
 \begin{enumerate}[(a)]
 \setcounter{enumi}{3}
 \setlength{\itemsep}{1pt}
\item the coordinate system \eqref{curvilin} is \emph{Tang compatible} meaning that (i) should $\ddot\Gamma(\cdot)$ have isolated zeros, they accumulate at most at infinity, and (ii) the function $\alpha$ is piecewise continuous and such that $\dot\alpha(s)=\tau(s)$ holds whenever $\ddot\Gamma(s)\ne 0$. \label{assd}
 \end{enumerate}

To define a tube built over the curve $\Gamma$ we have to fix its cross section. We suppose that it is an open precompact set $M\subset\R^2$ containing the origin of the coordinates and we set $a:=\sup_{x\in M} |x|$; without loss of generality we may suppose that $M$ is simply connected. Using the map \eqref{curvilin}, where we identify points in $M$ with their polar coordinates, we set
 \begin{equation} \label{benttube}
 \Omega^\alpha_{\Gamma,M}:= x(\R\times M),
 \end{equation}
i.e. we identify the tube with the image of $\Omega_{\Gamma_0,M} := \R\times M$; we will drop the superscript and subscript if they are clear from the context. In order to use map \eqref{curvilin} to find spectral properties of $H_{m,\Omega}$, we have to assume in addition that
 \begin{enumerate}[(a)]
 \setcounter{enumi}{4}
 \setlength{\itemsep}{0pt}
\item the map $x$ is injective, \label{asse}
 \end{enumerate}
in other words, that the tube must not intersect itself. It is again easy to see that $a\|\gamma\|_\infty<1$ is necessary for the injectivity, however, from the global point of view it is not sufficient. In contrast to the two-dimensional situation, the existence of a locally orthogonal system of coordinates in $\Omega$ requires in view of \eqref{tube metric t} the additional assumption~\eqref{assd} which can be always satisfied provided the cross section $M$ is a disc centered at the origin giving us the freedom to choose the function $\alpha$, otherwise it is a restriction to the class of admissible tubes.

As before, the bent tubes must be neither infinite nor asymptotically straight. One can consider a loop-shaped tube build over a curve $\Gamma:\,[0,L]\to \R^3$ with an $L>0$ such that $\Gamma(0+)=\Gamma(L-)$ and the analogous relation holds for the derivatives up to the order four. Another interesting class consists of infinite curves such that their curvature and torsion are periodic functions of $s$ with the same period. In both cases assumptions \eqref{assd} and \eqref{asse} can be used again.

The importance of assumption \eqref{assd} does not mean that \emph{noncircular} tubes that do not satisfy it are not of interest; the opposite is true, just they have to be treated by other means. The case of a particular importance concerns \emph{twisted tubes}; for simplicity we restrict our attention to such tubes built over a straight line. We start again from a straight three-dimensional tube written as a Cartesian product, $\Omega_0= \R\times M$, where the cross section $M\subset\R^2$ has the same properties as before; we exclude the trivial case of a disc. Writing the element of $\R^3$ as a column, $x=(x_1,x_\perp)$, we define the twisted tube as the image of $\Omega_0$ by an appropriate map, namely
 \begin{equation} \label{twisttube}
 \Omega_\alpha := \{R_\alpha(x_1)x:\ x \in \R\times M\},
 \end{equation}
where $\alpha:\R\to\R$ is a $C^2$-smooth function with the first and second derivatives bounded on $\R$, and
$$
R_\alpha(x_1) = \left( \begin{array}{ccc}
1 & 0 & 0 \\
0 & \cos \alpha(x_1) & \sin \alpha(x_1)\\
0 & -\sin \alpha(x_1) & \cos \alpha(x_1)
\end{array} \right).
$$
Note that $\Omega_\alpha$ is nothing but the tube discussed above with $\gamma=\tau=0$ and the rotation $\alpha$ with respect to the tube axis not obeying condition \eqref{assd}.

So far we have considered tubes of a fixed cross section. Another interesting situation arises when the latter \emph{varies locally}. Consider a set-valued function $x\mapsto M_x$ which assigns to each $x\in\R$ a precompact, simply connected set $M_x\subset \R^2$ and define
 \begin{equation} \label{deftube}
\Omega := \bigcup_{x\in\R} \{ x \} \times M_x.
 \end{equation}
We assume that
 \begin{enumerate}[(a)]
 \setcounter{enumi}{5}
 \setlength{\itemsep}{0pt}
\item $\Omega$ is a local deformation of a straight tube: there is a set $M\subset \R^2$ and $x_0>0$ such that $M_x=M$ if $|x|>x_0$, \label{assf}
\item the cross section of $\Omega$ varies in a piecewise continuous manner, that is, apart of a discrete set of points, to each $x\in\R$ and $\varepsilon>0$ there is an open set $O\ni x$ such that for any $x'\in O$ the symmetric difference $(M_x\setminus M_{x'})\cup(M_{x'}\setminus M_x)$ is contained in the $\varepsilon$-neighborhood of the boundary $\partial M_x$. Moreover, the deformation is supposed to obey a global bound: there is a precompact $N\subset\R^2$ such that $M_x\subset N$ holds for all $x\in\R$. \label{assg}
 \end{enumerate}

\medskip

Up to now the generating manifold, the curve over which the tube was built, had codimension two. Let us turn to the situation when the generating manifold is of codimension one, in other words, it is a smooth surface $\Sigma$ in $\R^3$. The task of parametrizing it is now more complicated as there is no natural system of coordinates one could use. In general, one employs an atlas to describe the surface geometry, and even if it consists of a single chart, the existence of a diffeomorphism of $\Sigma$ to the plane expressed in terms of geodesic polar coordinates is not guaranteed \cite{GM69}.

For the formulation simplicity, let us assume that such coordinates exist, meaning that there is a pole $o\in\Sigma$ such that the exponential mapping $\exp_o: \mathrm{T}_o\Sigma\to\Sigma$ is a diffeomorphism; if it is not the case, everything can be rewritten in terms of suitable local charts. The `radial' coordinate lines are the geodesics emanating from $o$ and the geodesic circles connect points with the same geodesic distance from the pole. The surface $\Sigma$ is expressed by a map $p:\,\Sigma_0\to\R^3$, where $\Sigma_0:= (0,\infty)\times S^1$ is the plane with polar coordinates, $S^1$ being the unit circle; we write $q=(s,\theta)$. The tangent vectors $p_{,\mu}:=\pd p/\pd q^\mu$ are linearly independent and their cross-product defines a unit normal field~$n$ on~$\Sigma$. This allows us to define locally orthogonal coordinates in the vicinity of $\Sigma$ as the map
 \begin{equation} \label{3Dcoord}
 x(q,u):=p(q)+u n(q),
 \end{equation}
and the \emph{layer} of width~$d=2a>0$ built over the surface~$\Sigma$ as the corresponding image of the straight layer $\Omega_0:=\Sigma_0\times(-a,a)$,
 \begin{equation} \label{layer}
 \Omega:= x(\Omega_0).
 \end{equation}
We assume that the surface is not isomorphic to the $xy$ plane and that
 \begin{enumerate}[(a)]
 \setcounter{enumi}{7}
 \setlength{\itemsep}{0pt}
\item the map \eqref{3Dcoord} is $C^3$-smooth and injective on $\Omega_0$, i.e. the layer $\Omega$ does not intersect itself. In particular, locally the injectivity requires $\,a<\rho_m:= \left(\max\left\{\|k_1\|_\infty, \|k_2\|_\infty\right\}\right)^{-1}$, where $k_j$ are the principal curvatures mentioned below, which thus have to be uniformly bounded, $\|k_j\|_\infty <\infty\:$ for $j=1,2$. \label{assh}
 \end{enumerate}
Metric properties of $\Omega$ are derived from those of the generating surface $\Sigma$. Its metric tensor, $g_{\mu\nu}:= p_{,\mu} \cdot p_{,\nu}$, has in the geodesic polar coordinates a diagonal form, $(g_{\mu\nu})=\mathrm{diag}(1,r^2)$, where $r^2\equiv g:=\det(g_{\mu\nu})$ is the square of the Jacobian of the exponential mapping which satisfies the classical Jacobi equation
 \begin{equation} \label{Jacobi}
 \ddot{r}(s,\theta)+K(s,\theta)\,r(s,\theta)=0
 \quad \textrm{with}\quad
 r(0,\theta)=1\!-\!\dot{r}(0,\theta)=0,
 \end{equation}
where $\dot r$ denotes the partial derivative of $r$ with respect to $s$. The Gauss curvature $K$ appearing in (\ref{Jacobi}) together with the mean curvature $M$ are determined in the usual way: the second fundamental form $h_{\mu\nu}:= -n_{,\mu} \cdot p_{,\nu}$ gives rise to the Weingarten tensor $h_\mu^{\:\:\nu}:= h_{\mu\rho}g^{\rho\nu}$, which in turn defines the said two curvatures by $K:=\det(h_\mu^{\:\:\nu})$ and $M:={1\over 2}\mathrm{tr} (h_\mu^{\:\:\nu})$. What is important for us are the corresponding global quantities obtained by integrating with respect to the invariant surface element, $\D\sigma:=g^{1/2}\D q$, the total Gauss curvature $\KK$ and the quantity $\MM$, defined respectively by
 $$ 
 \KK:=\int_{\Sigma_0} K(q)\, \D\sigma\,,
 \quad \MM:= \left(\int_{\Sigma_0} M(q)^2\, \D\sigma
 \right)^{1/2}\,.
 $$ 
The latter always exists, being possibly infinite, while $\KK$ requires the integral to make sense which is matter of assumption, cf.~\eqref{assj} below. Recall also that the eigenvalues of the Weingarten map matrix are the principal curvatures $k_1,k_2$ through which the local Gauss and mean curvatures are expressed as $K=k_1 k_2$ and $M=\frac{1}{2}(k_1+k_2)$, respectively.

To describe layer classes we will be interested in, we adopt a couple of geometric assumptions:
 \begin{enumerate}[(a)]
 \setcounter{enumi}{8}
 \setlength{\itemsep}{0pt}
\item The layer is asymptotically planar, that is, $\,K(x),\,M(x) \to 0\,$ for $|x| \rightarrow \infty$. \label{assi}
 \end{enumerate}
If we have the geodesic polar coordinates, this means that $\,K(s,\theta),\,M(s,\theta) \to 0$ holds as $\,s\to\infty$, however assumption \eqref{assi} makes sense also if the atlas is more complicated, because the geodetic distance from a fixed point is well defined. Furthermore, we suppose that
 \begin{enumerate}[(a)]
 \setcounter{enumi}{9}
 \setlength{\itemsep}{0pt}
\item the total Gauss curvature exists, $\,K\in L^1(\Sigma_0,\D\sigma)$. \label{assj}
 \end{enumerate}
As in the case of tubes, the parametrization using coordinates \eqref{3Dcoord} can also be used to describe other curved layers of a fixed width, such as those built over a compact surface without a boundary, periodically curved layers, etc.

In addition to curved fixed-width layers we can consider flat ones with local deformations such as, for instance,
 \begin{equation} \label{bulgedlayer}
 \Omega_f:= \{x=(y,z):\, y\in\R^2,\, 0<z<d+f(y)\},
 \end{equation}
where $f:\R^2\to[0,\infty)$ is a bounded function of a compact support, or layers with a two-sided bulge. We have other cases of interest, an important one concerns \emph{laterally coupled layers}. In analogy with the two-dimensional case, by that we mean two adjacent flat layers of the widths $d_1,\,d_2>0$; we suppose that that their common boundary contains a `window' in the form of open set $\WW\subset\R^2$, meaning that $\Omega$ has the form
\begin{equation} \label{laterally_coupled_3D}
  \Omega = \big\{ (x,y, z) \in \mathbb{R}^3: x,y \in \mathbb{R},\, z \in (-d_2,d_1) \big\} \setminus \big( (\mathbb{R}^2 \setminus \WW) \times \{ 0 \} \big).
\end{equation}

\section{The geometrically induced spectrum in two dimensions}\label{s:2Dspectrum}
\setcounter{equation}{0}

After these preliminaries we can describe relations between the spectrum of the operator $H_{m,\Omega}$ and the geometry of the region $\Omega$ that supports it. We consider first the two-dimensional situations and the essential spectrum.

\begin{theorem} \label{thm:spectess2D}
We have $\sigma_\mathrm{ess}(H_{m,\Omega}) = (-\infty,-\epsilon_\mathrm{t}] \cup \{mc^2\} \cup [\epsilon_\mathrm{t},\infty)$, where
 \begin{enumerate}[(i)]
  \setlength{\itemsep}{0pt}
\item $\epsilon_\mathrm{t} = c\sqrt{m^2c^2+\big(\frac{\pi}{d}\big)^2}$ if $\Omega$ is a bent strip of width $d=2a$ satisfying assumptions~\eqref{assa} and \eqref{assb}.
\item The same is true if $\Omega$ is the weakly deformed strip in~\eqref{weakly def strip}.
\item For laterally coupled layers as in~\eqref{laterally_coupled_2D} of the widths $d_1,\,d_2>0$ we have $\epsilon_\mathrm{t} = c\sqrt{m^2c^2+\big(\frac{\pi}{d}\big)^2}$, where $d=\max\{d_1,d_2\}$.
 \end{enumerate}
\end{theorem}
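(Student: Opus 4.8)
The plan is to deduce everything from Theorem~\ref{theorem_spectrum_general}, transporting to the Dirac operator the standard description of the essential spectrum of the Dirichlet Laplacian in waveguide-type regions. Introduce the increasing homeomorphism $\phi\colon[0,\infty)\to[mc^2,\infty)$, $\phi(\lambda):=c\sqrt{\lambda+(mc)^2}$, so that $mc^2=\phi(0)$, $\epsilon_\mathrm{t}=\phi\big((\pi/d)^2\big)$, and $mc^2<\epsilon_\mathrm{t}$. By Theorem~\ref{theorem_spectrum_general} the spectrum of $H_{m,\Omega}$ is $\{mc^2\}\cup\phi\big(\sigma(-\Delta_\mathrm{D}^\Omega)\big)\cup\big({-}\phi\big(\sigma(-\Delta_\mathrm{D}^\Omega)\big)\big)$, hence contained in $(-\infty,-mc^2]\cup[mc^2,\infty)$.

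The first step will be to lift this to the level of the essential spectrum, i.e.\ to prove
\[
  \sigma_\mathrm{ess}(H_{m,\Omega})=\{mc^2\}\cup\big\{\pm c\sqrt{\lambda+(mc)^2}\colon\lambda\in\sigma_\mathrm{ess}(-\Delta_\mathrm{D}^\Omega)\big\}.
\]
Here I would combine three facts: $mc^2\in\sigma_\mathrm{ess}(H_{m,\Omega})$ by Theorem~\ref{theorem_spectrum_general}(i); for $\mu>mc^2$ the number $\lambda:=\phi^{-1}(\mu)>0$ is the unique preimage under $\phi$, and $\mu\in\sigma(H_{m,\Omega})\Leftrightarrow\lambda\in\sigma(-\Delta_\mathrm{D}^\Omega)$ by the spectral formula; and by Theorem~\ref{theorem_spectrum_general}(iii), $\mu$ is an isolated eigenvalue of $H_{m,\Omega}$ of finite multiplicity exactly when $\lambda$ is one of $-\Delta_\mathrm{D}^\Omega$. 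Since the essential spectrum of a self-adjoint operator is the complement of the set of its isolated finite-multiplicity eigenvalues, this yields $\mu\in\sigma_\mathrm{ess}(H_{m,\Omega})\Leftrightarrow\phi^{-1}(\mu)\in\sigma_\mathrm{ess}(-\Delta_\mathrm{D}^\Omega)$ for $\mu>mc^2$; the range $\mu<-mc^2$ is handled symmetrically, $(-mc^2,mc^2)$ lies in the resolvent set by the inclusion above, and $-mc^2$ itself is not in $\sigma(H_{m,\Omega})$ because this would force $0\in\sigma(-\Delta_\mathrm{D}^\Omega)$, impossible since $0$ is not an eigenvalue of a Dirichlet Laplacian and $0$ will lie strictly below its essential spectrum by the next step.

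The second step is entirely about the Dirichlet Laplacian and is where the geometric assumptions enter; I would simply quote the classical waveguide results. For a bent strip satisfying~\eqref{assa} and \eqref{assb} one has $\sigma_\mathrm{ess}(-\Delta_\mathrm{D}^\Omega)=[(\pi/d)^2,\infty)$, see \cite{EK15}; the same holds for the compactly deformed strip~\eqref{weakly def strip}, since it agrees with the straight strip of width $d$ outside $[-b,b]$, so that a Dirichlet--Neumann bracketing reduces the essential spectrum to that of two straight half-strips of width $d$; and for the laterally coupled strips~\eqref{laterally_coupled_2D} the region decouples far from the window into straight half-strips of widths $d_1$ and $d_2$, whence $\sigma_\mathrm{ess}(-\Delta_\mathrm{D}^\Omega)=[\min\{(\pi/d_1)^2,(\pi/d_2)^2\},\infty)=[(\pi/d)^2,\infty)$ with $d=\max\{d_1,d_2\}$, again all contained in \cite{EK15}. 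Inserting $\sigma_\mathrm{ess}(-\Delta_\mathrm{D}^\Omega)=[(\pi/d)^2,\infty)$ into the identity of the first step and using $\phi\big([(\pi/d)^2,\infty)\big)=[\epsilon_\mathrm{t},\infty)$ together with $mc^2<\epsilon_\mathrm{t}$ gives $\sigma_\mathrm{ess}(H_{m,\Omega})=(-\infty,-\epsilon_\mathrm{t}]\cup\{mc^2\}\cup[\epsilon_\mathrm{t},\infty)$.

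I do not expect a genuine obstacle: the argument is an application of Theorem~\ref{theorem_spectrum_general} plus textbook facts about Dirichlet waveguides. The only points needing attention are the bookkeeping at the threshold energies $\pm mc^2$ --- one must verify that $mc^2$ stays isolated in $\sigma_\mathrm{ess}(H_{m,\Omega})$, which is precisely the inequality $mc^2<\epsilon_\mathrm{t}$, and that $-mc^2$ leaves the spectrum altogether --- and, in case (iii), making sure the threshold of the coupled waveguide is read off from the wider channel, i.e.\ from $d=\max\{d_1,d_2\}$ and not the minimum.
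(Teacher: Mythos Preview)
Your proposal is correct and follows the same strategy as the paper: reduce via Theorem~\ref{theorem_spectrum_general} to the essential spectrum of the Dirichlet Laplacian, then quote the standard waveguide results from \cite{EK15}. The paper's proof is a one-sentence pointer to Proposition~1.1.1 and Theorems~1.4 and~1.5 of \cite{EK15}; you have simply spelled out the implicit step---deriving the essential-spectrum correspondence from the point-spectrum/multiplicity statement of Theorem~\ref{theorem_spectrum_general}(iii) together with the homeomorphism $\phi$---and handled the thresholds $\pm mc^2$ explicitly, which is all sound.
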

\begin{proof}
In view of Theorem~\ref{theorem_spectrum_general}, the essential spectrum is determined by that of the Dirichlet Laplacian in the respective situations, cf.~Proposition~1.1.1 and Theorems~1.4 and~1.5 in \cite{EK15}.
\end{proof}

However, a nontrivial geometry of $\Omega$ can give rise to a nonvoid discrete spectrum of $H_{m,\Omega}$ which is by Theorem~\ref{theorem_spectrum_general} mirror-symmetric, consisting of eigenvalues $\pm\lambda_j$, supposed to be arranged in the ascending and descending order in the positive and negative part, respectively, multiplicity included.
\begin{theorem} \label{thm:bentstrip}
Let $\Omega$ be a bent strip of halfwidth $a$ satisfying assumptions~\eqref{assa} and \eqref{assb}, then $\sigma_\mathrm{disc}(H_{m,\Omega}) \ne\emptyset$. Furthermore, let $\{\Omega_\beta\}$ be a family of such strips with the curvature equal to $\beta\gamma$ for a fixed function $\gamma$ consistent with \eqref{assa} and~\eqref{assb} such that $\gamma, \dot\gamma, |\ddot\gamma|^{1/2} \in L^2(\mathbb{R}, |s| \D s)$; then for all $\beta$ small enough $H_{m,\Omega_\beta}$ has just two simple discrete eigenvalues $\pm \lambda_1(\beta)$ such that
 \begin{equation} \label{weakbentstrip}
 \lambda_1(\beta) = c\sqrt{m^2c^2+\epsilon(\beta)},
 \end{equation}
where
 $$
 \sqrt{\big(\textstyle{\frac{\pi}{2a}}\big)^2\!-\!\epsilon(\beta)} = {\beta^2\over 8}\,\biggl\lbrace\, \|\gamma\|^2
 -\,{1\over 2}\, \sum_{n=2}^{\infty}\, (\chi_n,u\chi_1)^2  \varrho_n \int_{\R^2} \dot\gamma(s)\,
 \ee^{-\varrho_n|s-s'|} \dot\gamma(s')\, \D s\,\D s'\, \biggr\rbrace+ \OO(\beta^3),
 $$
with $\varrho_n:= \frac{\pi}{2a} \sqrt{n^2\!-\!1}\;$, $\chi_n(u):= \frac{1}{\sqrt{a}}\sin\frac{\pi n}{2a}(u+a)$, and $(\cdot, \cdot)$ being the inner product in $L^2(-a,a)$, the sum runs in fact over even $n$ only.
\end{theorem}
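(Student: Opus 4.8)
The plan is to reduce the entire statement to known properties of the Dirichlet Laplacian through Theorem~\ref{theorem_spectrum_general} and then invoke the existing waveguide literature. For the first assertion, recall from Theorem~\ref{thm:spectess2D}(i) that $\sigma_\mathrm{ess}(H_{m,\Omega})=(-\infty,-\epsilon_\mathrm{t}]\cup\{mc^2\}\cup[\epsilon_\mathrm{t},\infty)$ with $\epsilon_\mathrm{t}=c\sqrt{m^2c^2+(\pi/d)^2}$, $d=2a$, whereas $\inf\sigma_\mathrm{ess}(-\Delta^\Omega_\mathrm{D})=(\pi/d)^2$. By part~(iii) of Theorem~\ref{theorem_spectrum_general} a point of $\sigma_\mathrm{disc}(H_{m,\Omega})$ in the gap $(-\epsilon_\mathrm{t},\epsilon_\mathrm{t})$---necessarily distinct from $mc^2$, which lies in $\sigma_\mathrm{ess}$ by Theorem~\ref{theorem_spectrum_general}(i)---exists exactly when $-\Delta^\Omega_\mathrm{D}$ has an eigenvalue $\lambda\in(0,(\pi/d)^2)$, with matching multiplicity. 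For a non-straight, asymptotically straight strip obeying \eqref{assa} and \eqref{assb} this is the classical curvature-induced binding, $\inf\sigma(-\Delta^\Omega_\mathrm{D})<(\pi/d)^2$ with the infimum attained, cf.~\cite[Chapter~1]{EK15}; feeding it back through Theorem~\ref{theorem_spectrum_general} produces a pair $\pm c\sqrt{\lambda+(mc)^2}\in\sigma_\mathrm{disc}(H_{m,\Omega})$, so $\sigma_\mathrm{disc}(H_{m,\Omega})\neq\emptyset$.

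For the weakly bent family $\{\Omega_\beta\}$ the work is the asymptotic analysis of $-\Delta^{\Omega_\beta}_\mathrm{D}$. Straightening the strip by the curvilinear coordinates \eqref{bent strip coord2} and conjugating with the unitary map $\psi\mapsto(1+u\beta\gamma)^{1/2}\psi$ from $L^2(\Omega_\beta)$ onto $L^2(\R\times(-a,a))$ turns $-\Delta^{\Omega_\beta}_\mathrm{D}$ into $-\partial_s(1+u\beta\gamma)^{-2}\partial_s-\partial_u^2+V_\beta$ with Dirichlet conditions at $u=\pm a$, where $V_\beta$ is the Duclos--Exner effective potential, equal to $-\tfrac{1}{4}\beta^2\gamma(s)^2+\OO(\beta^3)$ uniformly. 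Expanding in $\beta$, projecting onto the transverse ground state $\chi_1$ and eliminating the higher transverse channels $\chi_n$, $n\ge2$, by second-order perturbation theory---this is where the overlaps $(\chi_n,u\chi_1)$ and the resolvent kernels $(2\varrho_n)^{-1}\ee^{-\varrho_n|s-s'|}$ of $-\partial_s^2+\varrho_n^2$ on $L^2(\R)$ enter---one obtains an effective one-dimensional Schr\"{o}dinger operator $-\partial_s^2+\beta^2 W_\gamma(s)+\OO(\beta^3)$ on $L^2(\R)$ whose potential has $\int_\R W_\gamma$ equal to $-\tfrac{1}{4}\|\gamma\|^2$ corrected by the channel-coupling sum written in the statement. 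The hypothesis $\gamma,\dot\gamma,|\ddot\gamma|^{1/2}\in L^2(\R,|s|\,\D s)$ is precisely the input for the standard one-dimensional weak-coupling lemma (of Birman--Schwinger type, in the spirit of Bulla--Gesztesy--Renger--Simon), which then yields, for $\beta$ small enough, a single simple eigenvalue $\epsilon(\beta)\in(0,(\pi/2a)^2)$ of $-\Delta^{\Omega_\beta}_\mathrm{D}$ with $\sqrt{(\pi/2a)^2-\epsilon(\beta)}=-\tfrac{\beta^2}{2}\int_\R W_\gamma+\OO(\beta^3)$, i.e.\ exactly the displayed expansion. In the write-up I would quote this Dirichlet--Laplacian result from \cite[Chapter~1]{EK15} rather than reproduce the computation.

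It then remains to feed this back into Theorem~\ref{theorem_spectrum_general}: since $r=1$ for $d=2$, the single simple Dirichlet eigenvalue $\epsilon(\beta)\in(0,(\pi/2a)^2)$ produces precisely the two simple eigenvalues $\pm c\sqrt{m^2c^2+\epsilon(\beta)}$ of $H_{m,\Omega_\beta}$, which lie in $(-\epsilon_\mathrm{t},-mc^2)\cup(mc^2,\epsilon_\mathrm{t})$ and are therefore isolated and of finite multiplicity, i.e.\ discrete; and the absence of any other eigenvalue of $-\Delta^{\Omega_\beta}_\mathrm{D}$ below $(\pi/2a)^2$ for small $\beta$, together with Theorem~\ref{thm:spectess2D}(i), shows $H_{m,\Omega_\beta}$ has no further discrete eigenvalues. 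Putting $\lambda_1(\beta)=c\sqrt{m^2c^2+\epsilon(\beta)}$ gives \eqref{weakbentstrip}.

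The main obstacle is not a new analytic difficulty but bookkeeping: the substance is transcribing the known weak-coupling expansion for the curved Dirichlet strip correctly (matching the halfwidth-$a$ conventions, the normalization of $\chi_n$ and the combinatorial factors), and verifying that ``exactly two simple eigenvalues'' genuinely follows from the one-eigenvalue-below-threshold fact at small $\beta$. If one insisted on a self-contained derivation of the $\beta^2$ correction, the delicate step would be the channel-coupling term, which combines the contribution of $-\partial_s(1+u\beta\gamma)^{-2}\partial_s$---producing, after an integration by parts, the $\dot\gamma$ factors---with the $u\ddot\gamma$ part of $V_\beta$, both of which must be tracked at order $\beta^2$ to land on the stated kernel.
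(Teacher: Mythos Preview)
Your proposal is correct and follows essentially the same approach as the paper: reduce everything to the Dirichlet Laplacian via Theorem~\ref{theorem_spectrum_general}, then invoke the known curvature-induced binding result (\cite{GJ92}, \cite[Thm.~1.1]{EK15}) for the existence claim and the weak-coupling expansion \cite[Thm.~6.3]{EK15} for the asymptotics. The only difference is that you sketch the mechanics behind the cited expansion (straightening, Duclos--Exner potential, Birman--Schwinger), whereas the paper simply cites \cite[Thm.~6.3]{EK15}; note that the asymptotic result lives in Chapter~6 of \cite{EK15}, not Chapter~1.
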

\begin{proof}
In view of Theorem~\ref{theorem_spectrum_general} one has to establish the existence of a discrete spectrum for the Dirichlet Laplacian on the bent strip which can done using a variational argument, the idea of which belongs to Goldstone and Jaffe, cf.~\cite{GJ92} and \cite[Thm.~1.1]{EK15}. The asymptotic expansion in the mild-bending situation follows from \cite[Thm.~6.3]{EK15}.
\end{proof}

We mentioned that interesting spectral results can also be obtained for finite strips. A notable example is an isoperimetric-type inequality for loop-shaped strips of halfwidth $a$ build over smooth and closed curves without self-intersections of a \emph{fixed length} $L>0$. In that case the essential spectrum of $H_{m,\Omega}$ consists of a single point, the infinitely degenerate eigenvalue $mc^2$. The rest of the spectrum is purely discrete accumulating only at $\pm\infty$. Asking about optimization of the `smallest' pair of discrete eigenvalues, $\pm\lambda_1$, we get the following result:
\begin{theorem} \label{thm:optimstrip}
In this situation, $\lambda_1$ is uniquely maximized by $\Omega$ in the form of a circular annulus.
\end{theorem}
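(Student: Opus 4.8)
The plan is to transfer the question to the Dirichlet Laplacian and then run a Faber--Krahn-type trial-function argument. By Theorem~\ref{theorem_spectrum_general}, the smallest positive discrete eigenvalue of $H_{m,\Omega}$ equals $\lambda_1 = c\sqrt{(mc)^2 + \mu_1(\Omega)}$, where $\mu_1(\Omega)$ is the lowest eigenvalue of $-\Delta_\mathrm{D}^\Omega$; since $t\mapsto c\sqrt{(mc)^2+t}$ is strictly increasing, it suffices to prove that among all loop-shaped strips $\Omega=\Omega_{\Gamma,a}$ built over $C^4$-smooth simple closed curves $\Gamma$ of length $L$ (for which the strip is non-self-intersecting) the functional $\Omega\mapsto\mu_1(\Omega)$ is uniquely maximized by the circular annulus $\mathcal A=\{x\in\R^2: R-a<|x|<R+a\}$ with $R=L/2\pi$. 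I would first record that $\mathcal A$ is admissible: the Hopf Umlaufsatz gives $\big|\int_0^L\gamma(s)\,\D s\big| = 2\pi$, hence $2\pi\le\int_0^L|\gamma|\le L\|\gamma\|_\infty$, and since non-self-intersection of the strip forces $a\|\gamma\|_\infty<1$ we get $L>2\pi a$, i.e. $R>a$.

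For the inequality I would work in the curvilinear coordinates $(s,u)$ with $s\in[0,L)$ the periodic arc length and $u\in(-a,a)$ from Section~\ref{ss:2D}, in which the quadratic form of $-\Delta_\mathrm{D}^\Omega$ is $\psi\mapsto\int_0^L\!\int_{-a}^a\!\big(\frac{|\partial_s\psi|^2}{(1+u\gamma)^2}+|\partial_u\psi|^2\big)(1+u\gamma)\,\D u\,\D s$ with weight $(1+u\gamma)\,\D u\,\D s$ in $L^2$. Let $\varphi\in C^\infty([-a,a])$, $\varphi(\pm a)=0$, be the radial positive ground state of $-\Delta_\mathrm{D}^{\mathcal A}$ written through $u=|x|-R$, and use $\psi(s,u):=\varphi(u)$, which lies in $H^1_0(\Omega)$, as a trial function. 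Since $\partial_s\psi=0$, the Rayleigh quotient becomes
\[
 \frac{\int_0^L\!\int_{-a}^a|\varphi'(u)|^2(1+u\gamma(s))\,\D u\,\D s}{\int_0^L\!\int_{-a}^a|\varphi(u)|^2(1+u\gamma(s))\,\D u\,\D s}
 =\frac{\int_{-a}^a|\varphi'(u)|^2\,w(u)\,\D u}{\int_{-a}^a|\varphi(u)|^2\,w(u)\,\D u},\qquad w(u):=\int_0^L(1+u\gamma(s))\,\D s .
\]
By the Umlaufsatz, $\int_0^L\gamma(s)\,\D s$ is the same for $\Gamma$ and for the generating circle of $\mathcal A$, so $w$ is the \emph{same} function for both; consequently the right-hand side coincides with the Rayleigh quotient of $\varphi$ on $\mathcal A$, which equals $\mu_1(\mathcal A)$. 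The variational characterization of $\mu_1$ then yields $\mu_1(\Omega)\le\mu_1(\mathcal A)$ for every admissible $\Omega$.

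The remaining step in the plan is to show that equality forces $\Omega=\mathcal A$. Since $\Omega$ is bounded and connected, $\mu_1(\Omega)$ is simple with a sign-definite eigenfunction, so equality in the variational principle makes $\psi=\varphi(u)$ a ground state of $-\Delta_\mathrm{D}^\Omega$, in particular an eigenfunction. Writing the eigenvalue equation $-\Delta_\mathrm{D}^\Omega\psi=\mu_1(\Omega)\psi$ in the coordinates $(s,u)$ and using $\partial_s\psi=0$ gives, for a.e.\ $(s,u)$,
\[
 \frac{\gamma(s)}{1+u\gamma(s)}\,\varphi'(u) = -\varphi''(u)-\mu_1(\Omega)\varphi(u),
\]
whose right-hand side depends on $u$ only. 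Fixing any $u_0$ with $\varphi'(u_0)\neq0$ (such $u_0$ exist since $\varphi$ is non-constant) shows that $s\mapsto\gamma(s)/(1+u_0\gamma(s))$ is constant, and an elementary manipulation (using that $\Gamma$ closed rules out $\gamma\equiv0$) then forces $\gamma$ to be constant on $[0,L]$. A closed curve of constant curvature and length $L$ is a circle of radius $L/2\pi$, so $\Omega=\mathcal A$, which gives the asserted uniqueness.

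The only genuinely delicate point is the rigidity step: one must invoke simplicity and positivity of the Dirichlet ground state on the connected bounded loop strip to conclude that equality in the min--max principle really identifies the trial function $\varphi(u)$ with the true ground state, and then read off the constancy of $\gamma$ from the resulting ODE. Everything else --- the curvilinear form of the Laplacian, the cancellation produced by $\int_0^L\gamma\,\D s$ being a topological constant, and the admissibility of the extremal annulus --- is routine, modulo some care with the orientation convention fixing the sign of the rotation index.
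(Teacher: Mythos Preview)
Your proof is correct and follows the same route as the paper: reduce via Theorem~\ref{theorem_spectrum_general} to maximizing the principal Dirichlet eigenvalue, which the paper handles by citing \cite{EHL99}. You instead spell out the underlying argument---the transversal trial function $\psi(s,u)=\varphi(u)$ combined with the Umlaufsatz identity $\int_0^L\gamma\,\D s=\pm 2\pi$, followed by the rigidity step via the eigenvalue ODE---which is precisely the content of the cited reference, so the approaches coincide, yours being self-contained.
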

\begin{proof}
As the problem reduces to the maximization of the principal eigenvalue of $-\Delta_\mathrm{D}^\Omega$, the claim follows from Theorem~1, part (a), and the remark afterwards in \cite{EHL99}.
\end{proof}

Theorem~\ref{theorem_spectrum_general} allows us to get more information about the spectrum of $H_{m,\Omega}$ for infinite curved strips than stated above. For instance, an application of Payne-P\'olya-Weinberger inequality proved by Ashbaugh and Benguria \cite{AB92} yields a lower bound on the distance between $\lambda_1$ and the infinitely degenerate eigenvalue $mc^2$:
\begin{proposition} \label{prop:ppw2D}
Let $\Omega$ be a bent strip of halfwidth $a$ satisfying assumptions~\eqref{assa} and \eqref{assb} and suppose that $\#\sigma_\mathrm{disc}(H_{m,\Omega})=2N$, then we have
 \begin{equation} \label{ppw2D}
 \lambda_1 \ge c\sqrt{m^2c^2+3^{1-N}b_2\big(\textstyle{\frac{\pi}{2a}}\big)^2},
 \end{equation}
where $b_2:= \left(\frac{j_{0,1}}{j_{1,1}}\right)^2 \approx 0.394$ and $j_{r,1}$ is the first zero of the Bessel function $j_{r},\, r=0,1$.
\end{proposition}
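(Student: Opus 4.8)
The plan is to transfer the assertion to the Dirichlet Laplacian via Theorem~\ref{theorem_spectrum_general} and then invoke universal bounds for ratios of consecutive Dirichlet eigenvalues. By Theorem~\ref{theorem_spectrum_general}, the hypothesis $\#\sigma_\mathrm{disc}(H_{m,\Omega})=2N$ means that $-\Delta_\mathrm{D}^\Omega$ has exactly $N$ eigenvalues $\mu_1\le\mu_2\le\dots\le\mu_N$ below the bottom of its essential spectrum, which by Theorem~\ref{thm:spectess2D}(i) equals $(\pi/d)^2=(\pi/2a)^2$; moreover the smallest positive eigenvalue of $H_{m,\Omega}$ is $\lambda_1=c\sqrt{m^2c^2+\mu_1}$. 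Hence it suffices to prove
\begin{equation*}
  \mu_1\ \ge\ 3^{1-N}\,b_2\,\Big(\tfrac{\pi}{2a}\Big)^2 .
\end{equation*}

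Since the Payne--P\'olya--Weinberger inequalities are classically stated for bounded domains, the first step is a reduction to truncations. Set $\Omega_L:=x\big((-L,L)\times(-a,a)\big)$ for $L>0$, with Dirichlet conditions on all of $\partial\Omega_L$; then $-\Delta_\mathrm{D}^{\Omega_L}$ has purely discrete spectrum $\mu_1^{(L)}\le\mu_2^{(L)}\le\dots$. By domain monotonicity one has $\mu_k^{(L)}\ge\mu_k(\Omega)$ for every $k$, and since $C_0^\infty(\Omega)$ is dense in $H_0^1(\Omega)$ the non-increasing sequence $\mu_1^{(L)}$ converges to $\mu_1$ as $L\to\infty$. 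In particular, because $-\Delta_\mathrm{D}^\Omega$ has precisely $N$ eigenvalues below $(\pi/2a)^2$, the min--max principle gives $\mu_{N+1}(\Omega)=(\pi/2a)^2$, whence $\mu_{N+1}^{(L)}\ge(\pi/2a)^2$ for all~$L$.

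Next I would apply the eigenvalue-ratio inequalities on the bounded domain $\Omega_L$: the sharp two-dimensional Payne--P\'olya--Weinberger bound of Ashbaugh and Benguria~\cite{AB92} gives $\mu_2^{(L)}/\mu_1^{(L)}\le (j_{1,1}/j_{0,1})^2=b_2^{-1}$, while the (non-sharp) Payne--P\'olya--Weinberger bound yields $\mu_{k+1}^{(L)}/\mu_k^{(L)}\le 1+\tfrac{4}{2}=3$ for every $k$. Chaining the latter from $k=2$ up to $k=N$ and then the former,
\begin{equation*}
  \mu_1^{(L)}\ \ge\ b_2\,\mu_2^{(L)}\ \ge\ b_2\,3^{-(N-1)}\,\mu_{N+1}^{(L)}\ \ge\ 3^{1-N}\,b_2\,\Big(\tfrac{\pi}{2a}\Big)^2 .
\end{equation*}
Letting $L\to\infty$ and using $\mu_1^{(L)}\to\mu_1$ gives the desired lower bound on $\mu_1$, and substituting it into $\lambda_1=c\sqrt{m^2c^2+\mu_1}$ produces~\eqref{ppw2D}.

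The main obstacle is precisely the passage from the bounded truncations $\Omega_L$ to the infinite strip, i.e.\ the two ingredients $\mu_1^{(L)}\to\mu_1$ and $\mu_{N+1}^{(L)}\ge(\pi/2a)^2$: the classical PPW trial functions, of the form $x_j u_1$, are not square-integrable on the unbounded $\Omega$, so one cannot apply the inequalities there directly. Both ingredients are nevertheless standard spectral-theory facts --- monotone convergence of Dirichlet ground-state eigenvalues under domain exhaustion, and domain monotonicity combined with the identification of the essential-spectrum threshold supplied by Theorem~\ref{thm:spectess2D} --- and everything else is a direct application of the quoted inequalities together with the spectral correspondence of Theorem~\ref{theorem_spectrum_general}.
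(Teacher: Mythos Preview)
Your argument is correct and coincides with the paper's approach. The paper simply records that the inequality is a straightforward consequence of \cite[Thm.~3.1]{EK15} combined with Theorem~\ref{theorem_spectrum_general}; what you have written is precisely a reconstruction of the proof of that cited theorem---truncation to bounded $\Omega_L$, the sharp Ashbaugh--Benguria bound for $\mu_2^{(L)}/\mu_1^{(L)}$, the classical PPW bound $\mu_{k+1}^{(L)}/\mu_k^{(L)}\le 3$ for the remaining steps, and passage to the limit via domain monotonicity---followed by the spectral correspondence of Theorem~\ref{theorem_spectrum_general}.
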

The claim is a straightforward consequence of \cite[Thm.~3.1]{EK15}; it is clear that the bound is the strongest when $H_{m,\Omega}$ has just one pair of discrete eigenvalues. Similarly, one can translate to the Dirac operator setting other spectral estimates valid for Dirichlet Laplacians in strips, for instance, the Lieb-Thirring-type inequality for the moments of the sequence  $\big\{c\sqrt{\big(\frac{\pi}{2a}\big)^2+(mc)^2}-\epsilon_n\big\}\,$, where $\epsilon_n$ are the nonnegative eigenvalues of $H_{m,\Omega}$, cf.~\cite[Thm.~3.2]{EK15}.

Note further that the effect of geometrically induced binding we are discussing here is robust; it does not require the strip to have a smooth boundary. As an example, consider an L-shaped strip,
\begin{equation*}
  \Omega:= \{(x,y)\in\R^2: x,y>0,\, \min(x,y)<\pi\}.
\end{equation*}
For a Dirac operator on such a region, Theorem~1.2 and Proposition~1.2.3 of \cite{EK15} with $d=\pi$ imply the following claim:
\begin{proposition} \label{prop:Lshape}
$\sigma_\mathrm{ess}(H_{m,\Omega}) = (-\infty,-c\sqrt{m^2c^2+1}] \cup \{mc^2\} \cup [c\sqrt{m^2c^2+1},\infty)$ and the discrete spectrum consists of a pair of simple eigenvalues, $\pm c\sqrt{m^2c^2+\epsilon_1}$, where $\epsilon_1\approx 0.9291$.
\end{proposition}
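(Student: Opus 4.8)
The plan is to read off everything from Theorem~\ref{theorem_spectrum_general}, which reduces the spectral analysis of $H_{m,\Omega}$ to that of the Dirichlet Laplacian $-\Delta^\Omega_\mathrm{D}$ on the same region. First I would note that the L-shaped strip $\Omega=\{(x,y)\in\R^2:x,y>0,\ \min(x,y)<\pi\}$ is the union of two straight half-strips of width $\pi$ meeting at a right angle, so by Theorem~1.2 of \cite{EK15} its Dirichlet Laplacian has essential spectrum $\sigma_\mathrm{ess}(-\Delta^\Omega_\mathrm{D})=[1,\infty)$, the threshold $1=(\pi/\pi)^2$ being the first transverse Dirichlet eigenvalue of a straight strip of width $\pi$. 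Substituting this into the spectral formula of Theorem~\ref{theorem_spectrum_general} and invoking part~(i) there, we get $\sigma_\mathrm{ess}(H_{m,\Omega})=(-\infty,-c\sqrt{m^2c^2+1}\,]\cup\{mc^2\}\cup[c\sqrt{m^2c^2+1},\infty)$, which is the first assertion.

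For the discrete part I would quote Proposition~1.2.3 of \cite{EK15}, according to which $-\Delta^\Omega_\mathrm{D}$ has exactly one eigenvalue below the bottom of its essential spectrum, a simple one, $\epsilon_1\approx 0.9291$. Since $0<\epsilon_1<1$, part~(iii) of Theorem~\ref{theorem_spectrum_general} applies with $d=2$, hence $r=1$, and yields that $\pm c\sqrt{m^2c^2+\epsilon_1}$ are eigenvalues of $H_{m,\Omega}$, each of multiplicity one. Moreover $0<\epsilon_1<1$ gives the strict inequalities $mc^2<c\sqrt{m^2c^2+\epsilon_1}<c\sqrt{m^2c^2+1}$, so these two points (and symmetrically on the negative half-line) lie strictly between the infinitely degenerate eigenvalue $mc^2$ and the semibounded branch of $\sigma_\mathrm{ess}(H_{m,\Omega})$; hence they are isolated points of $\sigma(H_{m,\Omega})$ of finite multiplicity, i.e. they belong to $\sigma_\mathrm{disc}(H_{m,\Omega})$.

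It remains to check that there is nothing else in the discrete spectrum, which is again immediate from Theorem~\ref{theorem_spectrum_general}: every point of $\sigma(H_{m,\Omega})$ is either $mc^2\in\sigma_\mathrm{ess}$ (by part~(i)) or of the form $\pm c\sqrt{m^2c^2+\lambda}$ with $\lambda\in\sigma(-\Delta^\Omega_\mathrm{D})$, and for $\lambda\ge 1$ such a point lies in $\sigma_\mathrm{ess}(H_{m,\Omega})$ while the only $\lambda<1$ in $\sigma(-\Delta^\Omega_\mathrm{D})$ is $\epsilon_1$. Thus $\sigma_\mathrm{disc}(H_{m,\Omega})=\{\pm c\sqrt{m^2c^2+\epsilon_1}\}$, both simple. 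I do not anticipate a genuine obstacle: the argument is pure bookkeeping on top of Theorem~\ref{theorem_spectrum_general} and the cited Dirichlet-Laplacian facts; the only points requiring mild care are verifying the strict inequalities that place $\pm c\sqrt{m^2c^2+\epsilon_1}$ inside the spectral gaps, so that they are genuinely discrete, and tracking that the simplicity of $\epsilon_1$ from \cite{EK15} together with $r=1$ does give simple Dirac eigenvalues.
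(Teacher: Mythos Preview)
Your proposal is correct and follows exactly the approach the paper takes: it invokes Theorem~\ref{theorem_spectrum_general} together with Theorem~1.2 and Proposition~1.2.3 of \cite{EK15} (with $d=\pi$) to transfer the known Dirichlet-Laplacian facts for the L-shaped strip to $H_{m,\Omega}$. Your write-up is in fact more explicit than the paper's one-line justification, carefully verifying the gap location and simplicity via $r=1$, but the substance is identical.
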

In the same way, one can translate to the Dirac operator setting the results about spectra of more general polygonal ducts from Section~1.2 of \cite{EK15}.

Another situation where a local modification of a strip geometry may induce the existence of a discrete spectrum arises when the strip is locally modified as in~\eqref{weakly def strip}. A general existence result can be stated in a way independent of the cross section dimension and we state it in the next section, cf. Theorem~\ref{thm:locdef} below. In the two-dimensional situation we are able to demonstrate the following weak deformation behavior:
\begin{theorem} \label{thm:weaklocstripdef}
Let $\Omega = \Omega_{\beta f}$ be as in~\eqref{weakly def strip}. For small enough $\beta$, the discrete spectrum of $H_{m,\Omega}$ consists of a pair of simple eigenvalues, $\pm c\sqrt{m^2c^2+\epsilon_1(\beta)}$, provided $\langle f \rangle:= \int_\R f(x)\,\D x>0$. They are real-analytic functions at $\beta=0$ satisfying
 $$ 
 \epsilon_1(\beta)=  \Big(\frac{\pi}{d}\Big)^2 -\beta^2 \frac{\pi^4}{d^2} \langle f \rangle +\OO(\beta^3).
 $$ 
The discrete spectrum is empty if $\langle f \rangle<0$ as well in the critical case, $\langle f \rangle=0$, if $\beta$ is small and $8b< d\sqrt{3}$. On the other hand, a pair of weak bound states exists for $\langle f \rangle=0$ if
 $$ 
 \frac{\|f'\|^2}{\|f\|^2}< \frac{24}{9+\sqrt{117\!+\!48\pi^2}}\, \frac{\pi^2}{d^2}
 $$ 
and there are positive constants $c_1,\,c_2$ such that
 $$ 
 c_1\beta^4 \le \Big(\frac{\pi}{d}\Big)^2 - \epsilon_1(\beta) \le c_2\beta^4.
 $$ 
\end{theorem}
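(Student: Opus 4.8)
My plan is to reduce the whole statement, via Theorem~\ref{theorem_spectrum_general}, to a spectral analysis of the Dirichlet Laplacian $-\Delta_\mathrm{D}^{\Omega}$ on $\Omega=\Omega_{\beta f}$. Indeed, $\lambda\mapsto c\sqrt{(mc)^2+\lambda}$ is a strictly increasing, multiplicity‑preserving bijection of $(0,\infty)$ onto $(mc^2,\infty)$, and $\sigma_\mathrm{ess}(H_{m,\Omega})$ is already fixed by Theorem~\ref{thm:spectess2D}(ii), i.e. $\sigma_\mathrm{ess}(-\Delta_\mathrm{D}^{\Omega_{\beta f}})=[(\pi/d)^2,\infty)$. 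Hence Theorem~\ref{thm:weaklocstripdef} is equivalent to the following assertion about $\sigma_\mathrm{disc}(-\Delta_\mathrm{D}^{\Omega_{\beta f}})\subset\bigl(0,(\pi/d)^2\bigr)$: for all small $\beta>0$ it consists of a single simple eigenvalue $\epsilon_1(\beta)$, real‑analytic at $\beta=0$, obeying the stated expansion when $\langle f\rangle>0$; it is empty when $\langle f\rangle<0$, and also when $\langle f\rangle=0$ provided $8b<d\sqrt3$; and in the critical case $\langle f\rangle=0$ with $\|f'\|^2/\|f\|^2$ below the indicated threshold it again consists of a single simple eigenvalue with $c_1\beta^4\le(\pi/d)^2-\epsilon_1(\beta)\le c_2\beta^4$. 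These are exactly the classical facts on bound states in weakly deformed strips collected in \cite{EK15}, and I recall their proof in this language.

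The first step is to straighten the domain by the diffeomorphism $(x,y)\mapsto\bigl(x,\frac{d\,y}{d+\beta f(x)}\bigr)$, which maps $\Omega_{\beta f}$ onto the fixed strip $\Omega_0=\R\times(0,d)$; the induced unitary map $U_\beta:L^2(\Omega_{\beta f})\to L^2(\Omega_0)$ transforms $-\Delta_\mathrm{D}^{\Omega_{\beta f}}$ into a self‑adjoint operator $A(\beta)$ on $L^2(\Omega_0)$ with $A(0)=-\Delta_\mathrm{D}^{\Omega_0}$ and coefficients real‑analytic in $\beta$ near $0$, so $\{A(\beta)\}$ is an analytic family of type~(B). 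Decomposing $u\in\mathrm{dom}\,A(\beta)^{1/2}$ as $u=g(x)\chi_1(y)+u^\perp$ with $\chi_1(y)=\sqrt{2/d}\,\sin(\pi y/d)$ and $u^\perp(x,\cdot)\perp\chi_1$ fibrewise, one has $\langle A(0)u,u\rangle-(\pi/d)^2\|u\|^2\ge\|\partial_xu\|^2+3(\pi/d)^2\|u^\perp\|^2$ thanks to the transverse gap $(2\pi/d)^2-(\pi/d)^2=3(\pi/d)^2$; using this gap to complete the square in the $\beta$‑perturbation and eliminate $u^\perp$ (a Feshbach–Schur reduction onto the ground transverse mode), the existence of spectrum below $(\pi/d)^2$ becomes equivalent to the existence of negative spectrum of an effective one‑dimensional Schr\"odinger operator $h_\beta=-\partial_x^2+\beta V_1+\beta^2 V_2+\OO(\beta^3)$ on $L^2(\R)$, where $V_1(x)=-\frac{2}{d}\bigl(\frac{\pi}{d}\bigr)^2 f(x)$ and $V_2$ is an explicit combination of $f$, $f'$ and the reduced transverse resolvent at the ground level, both compactly supported in $[-b,b]$.

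When $\langle f\rangle>0$ one has $\int_\R V_1=-\frac2d(\pi/d)^2\langle f\rangle<0$, so the standard one‑dimensional weak‑coupling analysis (\cite{EK15} and references therein) shows that $h_\beta$ has, for small $\beta$, exactly one negative eigenvalue, and hence $\epsilon_1(\beta)=(\pi/d)^2-\frac14\bigl(\beta\int_\R V_1\bigr)^2+\OO(\beta^3)$, the asserted expansion; the absence of further eigenvalues is a Birman–Schwinger count together with the transverse gap, and simplicity and real‑analyticity at $\beta=0$ follow from the Kato–Rellich theorem for the analytic family $A(\beta)$, the emerging eigenvalue being isolated for small $\beta$. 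For the non‑existence statements the crucial point is that $\int_\R V_1\ge0$ does \emph{not} by itself exclude a one‑dimensional bound state, so one argues with the direct quadratic‑form bound above: for small $\beta$ the positive term $\|\partial_xu\|^2+3(\pi/d)^2\|u^\perp\|^2$ must dominate the perturbation. When $\langle f\rangle<0$ the leading $\beta$‑contribution already has the favourable sign on the ground mode; when $\langle f\rangle=0$ it integrates to zero and one must descend to order $\beta^2$, which is where $8b<d\sqrt3$ enters: since $f\in C_0^\infty(-b,b)$, the one‑dimensional Dirichlet inequality $\|f'\|^2\ge(\pi/2b)^2\|f\|^2$ together with $64b^2<3d^2$ forces $\|f'\|^2/\|f\|^2$ above the value $\frac{24}{9+\sqrt{117+48\pi^2}}\frac{\pi^2}{d^2}$ at which the relevant integrated second‑order quantity $\int_\R V_2$ (modified by the $V_1$‑contribution) changes sign, so that it is positive and the effective form stays nonnegative. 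Conversely, in the critical case the hypothesis $\|f'\|^2/\|f\|^2<\frac{24}{9+\sqrt{117+48\pi^2}}\frac{\pi^2}{d^2}$ says precisely that this integrated quantity is negative; the weak‑coupling analysis now with effective coupling $\beta^2$ produces a single simple eigenvalue at distance $\asymp\beta^4$ below $(\pi/d)^2$, which gives $c_1\beta^4\le(\pi/d)^2-\epsilon_1(\beta)\le c_2\beta^4$, the lower bound being supplemented by a matching trial function whose $x$‑profile varies on the scale $\beta^{-2}$.

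The main obstacle is the critical case $\langle f\rangle=0$ and the sharp threshold appearing in it: everything there rests on carrying the straightened operator $A(\beta)$ and the elimination of the transverse modes to second order in $\beta$, keeping careful track of the contribution of the reduced transverse resolvent at the ground level, and then determining the sign of the resulting integrated effective potential; the number $\frac{24}{9+\sqrt{117+48\pi^2}}\frac{\pi^2}{d^2}$ is exactly the value of $\|f'\|^2/\|f\|^2$ at which that quantity vanishes. By contrast, the case $\langle f\rangle>0$ — existence, simplicity, analyticity, leading asymptotics — is a routine trial‑function construction plus the standard one‑dimensional weak‑coupling formula, and the passage from the Dirac operator to the Laplacian is immediate from Theorem~\ref{theorem_spectrum_general}.
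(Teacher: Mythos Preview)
Your reduction via Theorem~\ref{theorem_spectrum_general} to the Dirichlet Laplacian on $\Omega_{\beta f}$ is exactly the paper's approach; the paper's proof consists of precisely that reduction followed by direct citations to Theorems~6.5 and~6.9 of \cite{EK15} (and to \cite{BGRS97} for the first claim), with no further argument. You go beyond this and outline how those Laplacian results are actually obtained --- straightening the strip, projecting onto the lowest transverse mode, and applying one-dimensional weak-coupling theory --- which is a legitimate expansion of the cited material rather than a different method.

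One caveat concerning your sketch of the critical case $\langle f\rangle=0$. You derive the condition $8b<d\sqrt3$ by arguing that the Poincar\'e inequality on $(-b,b)$ forces $\|f'\|^2/\|f\|^2$ above the number $\tfrac{24}{9+\sqrt{117+48\pi^2}}\,\tfrac{\pi^2}{d^2}$, which you treat as the exact zero of the integrated second-order effective potential, so that being above it would guarantee nonnegativity of the form. In \cite{EK15}, however, the two conditions arise from \emph{separate} direct estimates of the quadratic form and are each only \emph{sufficient} --- one for emptiness of the discrete spectrum, the other for existence of a bound state --- with a genuine gap between them; the threshold is not a sharp dichotomy, and ``$\|f'\|^2/\|f\|^2$ above threshold $\Rightarrow$ no bound state'' is not what is proved there. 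This does not invalidate the overall route, which coincides with the paper's, but that step of your sketch would have to follow the actual form bounds of \cite[Thm.~6.9]{EK15} rather than the shortcut you propose.
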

\begin{proof}
The results are obtained by Theorem~\ref{theorem_spectrum_general} and a variational method. The first claim follows from Theorem~6.5\footnote{This result in \cite{EK15} contains a misprint, $\langle f\rangle$ has there an extra square.} of \cite{EK15} which reproduces the result obtained in \cite{BGRS97}, the behavior in the critical case comes from \cite[Theorem~6.9]{EK15}.
\end{proof}

For laterally coupled strips described at end of Sec.~\ref{ss:2D} we have the following result:

\begin{theorem} \label{lateral bs}
We have $\sigma_\mathrm{disc}(H_{m,\Omega}) \ne\emptyset$ for any $\ell>0$. The eigenvalues $\pm c\sqrt{m^2c^2+\epsilon_j(\ell)}$ are simple with $\epsilon_j(\ell) \in \big(\big(\frac{\pi}{D}\big)^2, \big(\frac{\pi}{d}\big)^2 \big)$ which are continuously decreasing functions of $\ell$; their number, of both the positive and negative ones, is $2\left(\max\left\{1, \left\lfloor {\ell\over d} \sqrt{1\!-\!(1\!+\!\varrho)^{-2}} \right\rfloor \right\} + N\right)$ with $N \in \{0,1\}$. As for the weak coupling case, there are positive constants $c_1,\,c_2$ such that
 $$ 
 c_1 \ell^4 \le \big(\textstyle{\frac{\pi}{d}}\big)^2- \epsilon_1(\ell) \le c_2 \ell^4
 $$ 
holds for all sufficiently small positive window widths $\ell$.
\end{theorem}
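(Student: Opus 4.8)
The plan is to deduce all assertions from the spectral correspondence of Theorem~\ref{theorem_spectrum_general} combined with the known properties of the Dirichlet Laplacian $-\Delta^\Omega_\mathrm{D}$ on the region $\Omega$ of~\eqref{laterally_coupled_2D}. Since $d=\max\{d_1,d_2\}$ and, as recalled below, $\sigma_\mathrm{ess}(-\Delta^\Omega_\mathrm{D})=[(\pi/d)^2,\infty)$, part~(iii) of that theorem (with $r=1$) identifies the positive eigenvalues of $H_{m,\Omega}$ below $\epsilon_\mathrm{t}=c\sqrt{m^2c^2+(\pi/d)^2}$ with the numbers $c\sqrt{m^2c^2+\epsilon_j(\ell)}$, where $\epsilon_j(\ell)$ runs over the eigenvalues of $-\Delta^\Omega_\mathrm{D}$ in $(0,(\pi/d)^2)$ counted with multiplicity, the negative ones being their mirror images. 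As $t\mapsto c\sqrt{m^2c^2+t}$ is continuous and strictly increasing, simplicity, the inclusion $\epsilon_j(\ell)\in((\pi/D)^2,(\pi/d)^2)$, monotonicity and continuity in $\ell$, the counting formula and the weak-coupling estimate all transfer from $\epsilon_j$ to $\lambda_j$, so it suffices to prove them for $-\Delta^\Omega_\mathrm{D}$.

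The essential spectrum is found by a Neumann--Dirichlet bracketing along the lines $x=\pm a$, which decouples the bounded rectangle $R=(-a,a)\times(-d_2,d_1)$ from two outer pieces, each consisting of half-strips of widths $d_1$ and $d_2$; the rectangle has purely discrete spectrum and the outer pieces have spectrum $[(\pi/d)^2,\infty)$, whence $\sigma_\mathrm{ess}(-\Delta^\Omega_\mathrm{D})=[(\pi/d)^2,\infty)$. Since $\Omega$ is a proper open subset of the straight strip $\R\times(-d_2,d_1)$ of width $D=d_1+d_2=d(1+\varrho)$, domain monotonicity of Dirichlet eigenvalues (with strict inequality because the inclusion is proper) gives $\epsilon_j(\ell)\in((\pi/D)^2,(\pi/d)^2)$ for every discrete eigenvalue. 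Non-emptiness of the discrete spectrum for \emph{every} $\ell>0$ follows from a Goldstone--Jaffe-type trial function: the transverse ground state of the wider channel, provided with a logarithmic cut-off and suitably modified inside the window, has Rayleigh quotient strictly below $(\pi/d)^2$. Simplicity of all discrete eigenvalues follows from the reflection symmetry $x\mapsto-x$, splitting $-\Delta^\Omega_\mathrm{D}$ into an even and an odd part, together with a transverse mode-matching analysis on the window; these facts are collected in~\cite{EK15} and the waveguide literature cited there.

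For the counting formula one uses a two-sided bracketing by the rectangle $R$, whose Dirichlet eigenvalues are $(\pi n/\ell)^2+(\pi k/D)^2$. Since $\Omega\supsetneq R$, domain monotonicity gives at least $\#\{(n,k):n,k\ge1,\ (\pi n/\ell)^2+(\pi k/D)^2<(\pi/d)^2\}$ eigenvalues of $-\Delta^\Omega_\mathrm{D}$ below $(\pi/d)^2$; as $D\le2d$ forces $4(\pi/D)^2\ge(\pi/d)^2$, only $k=1$ contributes, and with $d/D=(1+\varrho)^{-1}$ this number equals $\lfloor(\ell/d)\sqrt{1-(1+\varrho)^{-2}}\rfloor$, whence together with the existence result the lower bound $\max\{1,\lfloor(\ell/d)\sqrt{1-(1+\varrho)^{-2}}\rfloor\}$. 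The Neumann bracketing along $x=\pm a$ gives the matching upper bound: the outer pieces have no spectrum below $(\pi/d)^2$, while the mixed rectangle (Neumann on $x=\pm a$, Dirichlet elsewhere) has $\lfloor(\ell/d)\sqrt{1-(1+\varrho)^{-2}}\rfloor+1$ eigenvalues below $(\pi/d)^2$, again only from $k=1$. Hence the exact count is $\max\{1,\lfloor(\ell/d)\sqrt{1-(1+\varrho)^{-2}}\rfloor\}+N$ with $N\in\{0,1\}$. Monotonicity of $\epsilon_j(\ell)$ is domain monotonicity applied to $\Omega_\ell\subset\Omega_{\ell'}$ for $\ell<\ell'$, strict because an eigenfunction of $-\Delta^{\Omega_\ell}_\mathrm{D}$ extended by zero cannot solve the eigenvalue problem on the larger domain by unique continuation; continuity follows from norm-resolvent continuity of $\ell\mapsto-\Delta^{\Omega_\ell}_\mathrm{D}$ after mapping the slit onto a fixed reference configuration.

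The main obstacle is the weak-coupling estimate $c_1\ell^4\le(\pi/d)^2-\epsilon_1(\ell)\le c_2\ell^4$, which is subtle precisely because the leading effect of opening the window vanishes: the transverse ground states of the two channels both vanish on the common boundary $y=0$, so the window couples the channels only through evanescent higher transverse modes and the binding energy is of order $\ell^4$ rather than $\ell^2$. The lower bound $(\pi/d)^2-\epsilon_1(\ell)\ge c_1\ell^4$ comes from a two-scale trial function matching a channel profile $\chi_1(y)\ee^{-\kappa|x|}$ with $\kappa\sim\ell^2$ to a localized correction near the window and optimizing in $\kappa$; the upper bound $(\pi/d)^2-\epsilon_1(\ell)\le c_2\ell^4$ requires a refined bracketing in which $\Omega$ is enlarged near the window by a controlled amount and the induced shift of the spectral threshold is estimated. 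Both are carried out in the quantum-waveguide references summarized in~\cite{EK15}; transporting the two-sided bound through $t\mapsto c\sqrt{m^2c^2+t}$ yields the stated estimate for $H_{m,\Omega}$ and finishes the proof.
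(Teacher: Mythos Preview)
Your route coincides with the paper's: reduce everything through Theorem~\ref{theorem_spectrum_general} to the Dirichlet Laplacian on the window-coupled region~\eqref{laterally_coupled_2D}, and then invoke the known waveguide results (the paper simply cites \cite[Theorem~1.5]{EK15} for existence, simplicity, the $\epsilon_j$-range and the count, and \cite[Theorem~6.10]{EK15} for the $\ell^4$ asymptotics). Your additional sketches of the bracketing count and of monotonicity/continuity are sound.

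Two points deserve correction, however. First, the ``logarithmic cut-off'' belongs to the layer case; in a strip the effective longitudinal problem is one-dimensional and the existence trial function is $\chi_1(y)\phi_n(x)$ with a linear cut-off $\phi_n$ (so that $\|\phi_n'\|^2=\mathcal{O}(n^{-1})$), locally modified inside the window to leak into the second channel. Second, your description of the \emph{upper} bound $(\pi/d)^2-\epsilon_1(\ell)\le c_2\ell^4$ is backwards: enlarging $\Omega$ lowers Dirichlet eigenvalues and therefore produces a \emph{lower} bound on the gap, not an upper one. What is actually needed is a lower bound on $\epsilon_1(\ell)$, and this is obtained not by domain enlargement but by an operator inequality/mode-matching estimate controlling how much the window can depress the threshold; see the argument behind \cite[Theorem~6.10]{EK15}. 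Since you ultimately defer to \cite{EK15} for both estimates, the proof stands, but the heuristic you give for the second inequality should be dropped or replaced.
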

\begin{proof}
The first claim follows from Theorem~1.5 of \cite{EK15}, the same result allows us to estimate positions of the eigenvalues and the critical values of $\ell$ at which eigenvalues emerge from the continuum. The weak coupling asymptotics is a consequence of Theorem~6.10 of \cite{EK15}.
\end{proof}

Before leaving the two-dimensional case, let us mention one more interesting example. This $\Omega$ consists of two strips, for simplicity both of the width $\pi$, \emph{crossing at the right angle}. In analogy with Proposition~\ref{prop:Lshape} one can check that $H_{m,\Omega}$ has then a pair of simple eigenvalues $\pm c\sqrt{m^2c^2+\epsilon_1}$, where $\epsilon_1\approx 0.66$. A new feature here is the existence of a pair of eigenvalues $\pm c\sqrt{m^2c^2+\epsilon_2}$ with $\epsilon_2\approx 2.73$ which are embedded in $\sigma_\mathrm{cont}(H_{m,\Omega})$ as one can check using a simple argument combing scaling and symmetry considerations, first proposed in \cite{SRW89}.

\section{The geometrically induced spectrum in three dimensions}\label{s:3Dspectrum}
\setcounter{equation}{0}

Let us pass to the three-dimensional situation, starting again from the essential spectrum of the Dirac particle confined to a tube or layer.
\begin{theorem} \label{thm:spectess3D}
We have $\sigma_\mathrm{ess}(H_{m,\Omega}) = (-\infty,-\epsilon_\mathrm{t}] \cup \{mc^2\} \cup [\epsilon_\mathrm{t},\infty)$, where
 \begin{enumerate}[(i)]
  \setlength{\itemsep}{0pt}
\item $\epsilon_\mathrm{t} = c\sqrt{m^2c^2+\mu_1}$, where $\mu_1$ is the principal eigenvalue of the Dirichlet Laplacian $-\Delta_\mathrm{D}^{M}$, if $\Omega$ is a bent tube of cross section $M$ satisfying assumptions~\eqref{assc}--\eqref{asse}.
\item The same is true if $\Omega$ is a locally deformed tube satisfying assumptions~\eqref{assf} and \eqref{assg}.
\item The same is true for the twisted tube \eqref{twisttube} provided the derivative $\dot\alpha$ is compactly supported.
\item If, on the other hand, the tube is periodically twisted outside a compact region, $\dot\alpha(x_1)=\beta$ for all $|x_1|$ large enough, we have $\epsilon_\mathrm{t} = c\sqrt{m^2c^2 + \inf \sigma(-\Delta_\mathrm{D}^{M} -\beta^2\pd^2_\varphi)}$, where $\varphi$ is the polar angle associated with the transverse variable $x_\perp$.
\item $\epsilon_\mathrm{t} = c\sqrt{m^2c^2+(\frac{\pi}{d})^2}$ if $\Omega$ is a curved layer of width $d=2a$ satisfying assumptions~\eqref{assh}, \eqref{assi}, and, in addition, $(h_\mu^\nu)_{,\rho}$ and $k_g:=\dot rr^{-1}$ vanish as $s\to\infty$.
\item The same is true for the bulged layer \eqref{bulgedlayer}.
\item For laterally coupled layers as in~\eqref{laterally_coupled_3D} of the widths $d_1,\,d_2>0$ we have $\epsilon_\mathrm{t} = c\sqrt{m^2c^2+(\frac{\pi}{d})^2}$, where $d=\max\{d_1,d_2\}$.
 \end{enumerate}
\end{theorem}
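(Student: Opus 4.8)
The plan is to proceed as in the proof of Theorem~\ref{thm:spectess2D}: reduce everything to the essential spectrum of the Dirichlet Laplacian $-\Delta^\Omega_\mathrm{D}$ with the help of Theorem~\ref{theorem_spectrum_general}, and then invoke the known spectra of Dirichlet Laplacians in these tube and layer geometries from~\cite{EK15}.

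First I would make the spectral reduction explicit. The map $t\mapsto c\sqrt{t+(mc)^2}$ is a homeomorphism of $[0,\infty)$ onto $[mc^2,\infty)$, and in every region considered here $\inf\sigma(-\Delta^\Omega_\mathrm{D})>0$, so along $\sigma(-\Delta^\Omega_\mathrm{D})$ the two branches $\pm c\sqrt{\lambda+(mc)^2}$ stay uniformly away from $mc^2$. Hence an isolated point of $\sigma(-\Delta^\Omega_\mathrm{D})$ of finite multiplicity is mapped to isolated points of $\sigma(H_{m,\Omega})$ of finite multiplicity and conversely, by Theorem~\ref{theorem_spectrum_general}(iii); together with $mc^2\in\sigma_\mathrm{ess}(H_{m,\Omega})$ from Theorem~\ref{theorem_spectrum_general}(i) this gives
\[
 \sigma_\mathrm{ess}(H_{m,\Omega})=\{mc^2\}\cup\big\{\pm c\sqrt{\lambda+(mc)^2}:\ \lambda\in\sigma_\mathrm{ess}(-\Delta^\Omega_\mathrm{D})\big\}.
\]
The theorem is therefore equivalent to $\sigma_\mathrm{ess}(-\Delta^\Omega_\mathrm{D})=[\mu_1,\infty)$ with the respective value of $\mu_1$, which is what I would take from~\cite{EK15} in each of the seven cases.

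Next I would handle the cases via stability of the essential spectrum under geometric perturbations that decay at infinity. In (i)--(iii) the perturbation of the straight tube $\R\times M$ vanishes far out -- the curvature of the axis by the assumptions of (i), the cross-section change being compactly supported by \eqref{assf}, the twisting being compactly supported since $\dot\alpha$ has compact support -- so a Dirichlet--Neumann bracketing (or Weyl sequences living near infinity), together with $\sigma(-\Delta_\mathrm{D}^{\R\times M})=[\mu_1,\infty)$ obtained by separation of variables and the Fourier transform in the longitudinal variable ($\mu_1$ the principal eigenvalue of $-\Delta_\mathrm{D}^{M}$), yields the claim. In (iv) the tube is, outside a bounded set, a tube twisted at the constant rate $\beta$; after untwisting, the Dirichlet Laplacian on that part is translation invariant, so a direct-integral decomposition (Fourier transform in the longitudinal variable) represents its spectrum as $\bigcup_{\xi\in\R}\sigma(H_\beta(\xi))$ with fibre operators of compact resolvent and $\inf\sigma(H_\beta(\xi))\to\infty$ as $|\xi|\to\infty$; identifying $\inf_{\xi}\inf\sigma(H_\beta(\xi))$ with $\inf\sigma(H_\beta(0))=\inf\sigma\big(-\Delta_\mathrm{D}^{M}-\beta^2\pd^2_\varphi\big)=:\mu_1$ and using continuity of the lowest fibre branch gives the half-line $[\mu_1,\infty)$ for that part, whence $\sigma_\mathrm{ess}(-\Delta^\Omega_\mathrm{D})=[\mu_1,\infty)$ after bracketing across the bounded non-periodic region. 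The layers (v)--(vii) work the same way: in (v), \eqref{assi} together with the decay of $(h_\mu^\nu)_{,\rho}$ and of $k_g=\dot r r^{-1}$ makes $-\Delta^\Omega_\mathrm{D}$, in the coordinates \eqref{3Dcoord}, asymptotically the Dirichlet Laplacian on the straight layer $\R^2\times(-a,a)$, with spectrum $[(\pi/2a)^2,\infty)=[(\pi/d)^2,\infty)$; in (vi) the bulge is compactly supported; in (vii) removing $(\R^2\setminus\WW)\times\{0\}$ leaves, far from the bounded window $\WW$, two decoupled flat layers of widths $d_1$ and $d_2$, so $\sigma_\mathrm{ess}(-\Delta^\Omega_\mathrm{D})=[(\pi/d_1)^2,\infty)\cup[(\pi/d_2)^2,\infty)=[(\pi/d)^2,\infty)$ with $d=\max\{d_1,d_2\}$. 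Each of these Dirichlet-Laplacian facts is available in~\cite{EK15} (bent, locally deformed and twisted tubes in Chapter~1, curved and bulged layers in Chapter~4, laterally coupled layers in Chapter~5), so in the final write-up this paragraph reduces to the corresponding references.

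The step I expect to be the main obstacle is (iv): one has to carry out the Floquet/direct-integral analysis of the constant-rate twisted tube and, above all, justify that the infimum of the lowest fibre eigenvalue is attained at quasimomentum $\xi=0$ -- this is what pins the bottom of the essential spectrum at $\inf\sigma(-\Delta_\mathrm{D}^{M}-\beta^2\pd^2_\varphi)$, which lies strictly above $\mu_1=\inf\sigma(-\Delta_\mathrm{D}^{M})$ when $M$ is not a disc (the Hardy-type effect of twisting) -- and then to check that the bounded non-periodic piece does not enlarge $\sigma_\mathrm{ess}$. Every other item is a routine combination of the classical spectra of straight tubes and flat layers with the stability of the essential spectrum under compactly supported (or asymptotically vanishing) geometric deformations.
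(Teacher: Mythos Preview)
Your proposal is correct and follows essentially the same route as the paper: reduce via Theorem~\ref{theorem_spectrum_general} to $\sigma_\mathrm{ess}(-\Delta_\mathrm{D}^\Omega)$ and then quote the relevant results from~\cite{EK15}, including the direct-integral analysis for the periodically twisted case~(iv). The ``obstacle'' you flag in~(iv) is exactly what \cite[Prop.~1.7.3]{EK15} supplies (the fibre infimum is attained at $p=0$), and the laterally coupled layers are treated in Chapter~4 of~\cite{EK15} (Thm.~4.7), not Chapter~5.
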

\begin{proof}
In view of Theorem~\ref{theorem_spectrum_general}, the claims follow from the essential spectrum properties of the corresponding Dirichlet Laplacians, specified in \cite[Prop.~1.3.1]{EK15} for claim (i), \cite[Thm.~1.4]{EK15} for claim (ii). If the tube is twisted, it is straightforward to check that $-\Delta_\mathrm{D}^\Omega$ is unitarily equivalent to $H_{\dot\alpha}:= -\Delta_\mathrm{D}^M - (\dot\alpha(x_1)\pd_\varphi+\pd_{x_1})^2$ on $L^2(\Omega_0)$, cf.~\cite[Sec.~1.7.1]{EK15}. If $\dot\alpha$ is compactly supported, this operator acts as $-\Delta_\mathrm{D}^{\Omega_0}$ outside a compact set and claim (iii) follows. If the tube is periodically twisted, $H_{\dot\alpha}$ is unitarily equivalent to the direct integral $\int^\oplus_\R h(p)\,\D p$, where
 \begin{equation} \label{dirint}
 h(p) = -\Delta_\mathrm{D}^M + (p-i\beta\pd_\varphi)^2
 \end{equation}
and $\sigma(H_{\dot\alpha}) = \sigma_\mathrm{ess}(H_{\dot\alpha}) = [\inf\sigma(h(0)),\infty)\,$ \cite[Prop.~1.7.3]{EK15}. The essential spectrum is preserved under compactly supported perturbations which gives claim (iv).

For curved layers satisfying assumptions~\eqref{assh} and \eqref{assi} we know from \cite[Prop.~4.2.1]{EK15} that $\inf\sigma_\mathrm{ess}(-\Delta_\mathrm{D}^\Omega)=(\frac{\pi}{2a})^2$; for that it is, in fact, sufficient for $\Sigma$ to be $C^2$-smooth. To complete the proof of claim (v) by showing that the essential spectrum covers the interval $[(\frac{\pi}{2a})^2,\infty)$ one can use Weyl's criterion in the way described in \cite{Kr01} and Remark~4.1.1 of \cite{EK15}; this requires the mentioned additional assumptions. For claims (vi) and (vii) we can similarly refer to the proof of Theorem~4.5 in \cite{EK15} and to \cite[Thm.~4.7]{EK15}, respectively.
\end{proof}

What is again more interesting is the discrete spectrum of $H_{m,\Omega}$ induced by the geometry of $\Omega$. If it is nonvoid, by Theorem~\ref{theorem_spectrum_general} it is mirror-symmetric, $\sigma_\mathrm{disc}(H_{m,\Omega})=\{\pm\lambda_j\}$, where we suppose the positive part to be arranged in the ascending order, multiplicity included.
\begin{theorem} \label{thm:benttube}
Let $\Omega$ be a bent tube of cross section $M$ satisfying assumptions~\eqref{assc}--\eqref{asse}, then $\sigma_\mathrm{disc}(H_{m,\Omega}) \ne\emptyset$. Furthermore, let $\{\Omega_\beta\}$ be a family of tubes with the cross section $M$ and torsion $\tau$ fixed and the curvature equal to $\beta\gamma$ for a fixed function $\gamma$ consistent with \eqref{assc} such that $\gamma, \dot\gamma, |\ddot\gamma|^{1/2} \in L^1(\mathbb{R}, |s| \D s)$; then for all $\beta$ small enough $H_{m,\Omega_\beta}$ has just two discrete eigenvalues $\pm\lambda_1(\beta)$, each of multiplicity two, such that
 \begin{equation} \label{weakbenttube}
 \lambda_1(\beta) = c\sqrt{m^2c^2+\epsilon(\beta)},
 \end{equation}
where
 \begin{align*} \label{}
 \sqrt{\mu_1-\epsilon(\beta)}\: =\: & \frac{\beta^2}{8}\|\gamma\|^2 - \frac{\beta^2}{16} \sum_{n=2}^\infty \int_{M\times M} \D y\D y'\, \chi_1(y)\chi_1(y')\chi_n(y)\chi_n(y') \\[.3em]
 & \times \sqrt{\mu_n-\mu_1} \int_{\R^2} h_s(s,y)\,\ee^{-\sqrt{\mu_n-\mu_1}|s-s'|}\,h_s(s',y')\, \D s\D s' + \OO(\beta^3).
 \end{align*}
In this expression $\mu_n$ are the eigenvalues of $-\Delta_\mathrm{D}^{M}$ and $\chi_n$ are the corresponding normalized eigenfunctions; the function $h_s:= r\gamma\tau\sin(\theta-\alpha) + r\dot\gamma\cos(\theta-\alpha)$ is integrated over $\D y= r\D r\D\theta$.
\end{theorem}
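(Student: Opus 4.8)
The plan is to argue exactly as in the proof of Theorem~\ref{thm:bentstrip}, reducing both assertions to statements about the Dirichlet Laplacian $-\Delta_\mathrm{D}^{\Omega}$ and then invoking the corresponding results of \cite{EK15}. By Theorem~\ref{thm:spectess3D}(i) the bottom of $\sigma_\mathrm{ess}(-\Delta_\mathrm{D}^{\Omega})$ equals $\mu_1$, the principal Dirichlet eigenvalue of the cross section $M$, so that $\sigma_\mathrm{disc}(-\Delta_\mathrm{D}^{\Omega})$ consists precisely of the eigenvalues of $-\Delta_\mathrm{D}^{\Omega}$ lying in $(0,\mu_1)$. Theorem~\ref{theorem_spectrum_general} then gives $\sigma_\mathrm{disc}(H_{m,\Omega})=\{\pm c\sqrt{\lambda+(mc)^2}:\lambda\in\sigma_\mathrm{disc}(-\Delta_\mathrm{D}^{\Omega})\}$, which is automatically mirror-symmetric about $0$, and by part~(iii) of that theorem (with $r=2$ since $d=3$) a simple eigenvalue $\lambda\in\sigma_\mathrm{disc}(-\Delta_\mathrm{D}^{\Omega})$ yields the pair $\pm c\sqrt{\lambda+(mc)^2}$, each of multiplicity two.

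For the first claim it is thus enough to know that a genuinely bent tube has $\sigma_\mathrm{disc}(-\Delta_\mathrm{D}^{\Omega})\neq\emptyset$. Under assumptions~\eqref{assc}--\eqref{asse}, in particular the Tang compatibility~\eqref{assd}, one passes by the usual unitary transformation to the straight tube $\Omega_0=\R\times M$ and removes the Jacobian, which turns $-\Delta_\mathrm{D}^{\Omega}$ into a Schr\"odinger-type operator whose effective potential carries the attractive curvature contribution $-\tfrac14\gamma^2$. A Goldstone--Jaffe type trial-function argument, cf.~\cite{GJ92} and the curved-tube binding result in \cite[Ch.~1]{EK15}, then produces a point of the spectrum below $\mu_1$, so $\sigma_\mathrm{disc}(-\Delta_\mathrm{D}^{\Omega})$, and hence $\sigma_\mathrm{disc}(H_{m,\Omega})$, is nonempty.

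For the mild-bending family $\{\Omega_\beta\}$, with $\tau$ and $M$ fixed and curvature $\beta\gamma$, the hypothesis $\gamma,\dot\gamma,|\ddot\gamma|^{1/2}\in L^1(\R,|s|\,\D s)$ is precisely what the weak-coupling analysis of \cite[Ch.~6]{EK15} requires: for all sufficiently small $\beta$ the operator $-\Delta_\mathrm{D}^{\Omega_\beta}$ has exactly one eigenvalue $\epsilon(\beta)$ below $\mu_1$, it is simple, and the Birman--Schwinger / Fredholm expansion of the reduced one-dimensional operator gives $\sqrt{\mu_1-\epsilon(\beta)}$ in the form asserted in the theorem, the terms with $n\geq2$ originating from the coupling of the transverse modes $\chi_n$ to $\chi_1$ through the curvature and the torsion-dependent factor $h_s=r\gamma\tau\sin(\theta-\alpha)+r\dot\gamma\cos(\theta-\alpha)$. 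Feeding this back through Theorem~\ref{theorem_spectrum_general}(iii) with $r=2$, the unique simple Dirichlet eigenvalue yields exactly the pair $\pm\lambda_1(\beta)=\pm c\sqrt{m^2c^2+\epsilon(\beta)}$, each of multiplicity two, and nothing else in $\sigma_\mathrm{disc}(H_{m,\Omega_\beta})$. The only genuinely delicate point is this weak-coupling step: one must check that the cited result of \cite{EK15} applies verbatim under the stated smoothness~\eqref{assc} and integrability assumptions and that its effective-potential expansion reproduces exactly the $h_s$-kernel above; the rest is a direct transcription via Theorem~\ref{theorem_spectrum_general}, the main bookkeeping subtlety being the multiplicity-doubling factor $r=2$, which makes ``two eigenvalues of $H_{m,\Omega_\beta}$'' equivalent to ``one eigenvalue of $-\Delta_\mathrm{D}^{\Omega_\beta}$''.
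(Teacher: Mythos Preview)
Your proposal is correct and follows essentially the same route as the paper: reduce via Theorem~\ref{theorem_spectrum_general} to the Dirichlet Laplacian on the tube, then invoke the variational (Goldstone--Jaffe type) existence result \cite[Thm.~1.3]{EK15} and the Birman--Schwinger weak-coupling expansion \cite[Thm.~6.3]{EK15}. Your treatment is in fact more explicit than the paper's, in particular your careful accounting of the multiplicity-doubling factor $r=2$ from Theorem~\ref{theorem_spectrum_general}(iii), which the paper leaves implicit.
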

\begin{proof}
The existence of a nontrivial discrete spectrum of $H_{m, \Omega}$ follows by Theorem~\ref{theorem_spectrum_general} from the corresponding property of $-\Delta_\mathrm{D}^{\Omega}$, which can be checked by a variational method \cite[Thm.~1.3]{EK15}, the weak-coupling expansion of the principal eigenvalue is obtained using the Birman-Schwinger principle \cite[Thm.~6.3]{EK15}.
\end{proof}

Proposition~\ref{prop:ppw2D} has a three-dimensional analogue:
\begin{proposition} \label{prop:ppw3D}
Let $\Omega$ be a bent tube of cross section $M$ satisfying assumptions~\eqref{assc}--\eqref{asse} and suppose that $\#\sigma_\mathrm{disc}(H_{m,\Omega})=4N$, then we have
 \begin{equation} \label{ppw3D}
 \lambda_1 \ge c\sqrt{m^2c^2+3^{1-N}b_3\mu_1},
 \end{equation}
where $b_3:= \left(\frac{\pi}{j_{3/2,1}}\right)^2 \approx 0.489$ with $j_{3/2,1}$ being the first zero of the Bessel function $j_{3/2}$.
\end{proposition}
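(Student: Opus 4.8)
The plan is to follow the same route as for the two-dimensional Proposition~\ref{prop:ppw2D}: move the question to the Dirichlet Laplacian on the tube $\Omega$ via the spectral correspondence of Theorem~\ref{theorem_spectrum_general}, apply the Payne--P\'olya--Weinberger-type bound for Dirichlet Laplacians on tubes, and translate the result back. First I would record the relevant consequences of Theorems~\ref{theorem_spectrum_general} and~\ref{thm:spectess3D}(i). Under assumptions~\eqref{assc}--\eqref{asse} the essential spectrum of $-\Delta_\mathrm{D}^\Omega$ starts at $\mu_1$, the principal Dirichlet eigenvalue of the cross section $M$, so $\sigma_\mathrm{disc}(-\Delta_\mathrm{D}^\Omega)$ consists exactly of the finitely many eigenvalues lying strictly below $\mu_1$. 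By Theorem~\ref{theorem_spectrum_general} the positive discrete eigenvalues of $H_{m,\Omega}$ are precisely the numbers $c\sqrt{(mc)^2+\lambda}$ with $\lambda\in\sigma_\mathrm{disc}(-\Delta_\mathrm{D}^\Omega)$, an eigenvalue $\lambda$ of multiplicity $k$ producing one of multiplicity $rk=2k$ since $r=2$ in dimension three; adding the mirror image in the negative half-line gives $\#\sigma_\mathrm{disc}(H_{m,\Omega})=4\,\#\sigma_\mathrm{disc}(-\Delta_\mathrm{D}^\Omega)$. Thus the hypothesis $\#\sigma_\mathrm{disc}(H_{m,\Omega})=4N$ is equivalent to $-\Delta_\mathrm{D}^\Omega$ having exactly $N$ eigenvalues below $\mu_1$, counted with multiplicity, and the smallest positive eigenvalue of $H_{m,\Omega}$ is $\lambda_1=c\sqrt{(mc)^2+\lambda_1(-\Delta_\mathrm{D}^\Omega)}$, where $\lambda_1(-\Delta_\mathrm{D}^\Omega)$ denotes the bottom of $\sigma_\mathrm{disc}(-\Delta_\mathrm{D}^\Omega)$.

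Next I would invoke the PPW-type estimate for Dirichlet Laplacians on tubes, \cite[Thm.~3.1]{EK15}, which rests on the Ashbaugh--Benguria ratio inequality \cite{AB92} for the two lowest eigenvalues together with an iteration over the remaining $N-1$ spectral gaps up to the threshold $\mu_1$, the iteration contributing the factor $3^{-(N-1)}$. In dimension three the pertinent Ashbaugh--Benguria constant is $(j_{1/2,1}/j_{3/2,1})^2$, and since the first positive zero of $j_{1/2}$ is $\pi$ this equals $(\pi/j_{3/2,1})^2=b_3$. Therefore $\lambda_1(-\Delta_\mathrm{D}^\Omega)\ge 3^{1-N}b_3\,\mu_1$. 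Substituting this into $\lambda_1=c\sqrt{(mc)^2+\lambda_1(-\Delta_\mathrm{D}^\Omega)}$ and using that $t\mapsto c\sqrt{(mc)^2+t}$ is increasing yields the asserted bound~\eqref{ppw3D}.

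The only step that needs genuine care is the multiplicity bookkeeping: one has to verify that the factor $4$ decomposes into the spinor doubling $r=2$ specific to three dimensions times the $\pm$ reflection symmetry of $\sigma(H_{m,\Omega})$ --- in contrast to the factor $2$ in the planar case --- and that $\sigma_\mathrm{disc}(-\Delta_\mathrm{D}^\Omega)$ really consists of the eigenvalues strictly below $\inf\sigma_\mathrm{ess}(-\Delta_\mathrm{D}^\Omega)=\mu_1$, which is exactly the content of Theorem~\ref{thm:spectess3D}(i). Beyond that the statement is a direct quotation of \cite[Thm.~3.1]{EK15}, in complete parallel to Proposition~\ref{prop:ppw2D}.
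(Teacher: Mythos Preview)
Your argument is correct and follows exactly the route the paper takes: the paper simply states that the result comes from \cite[Thm.~3.1]{EK15} combined with the spectral correspondence of Theorem~\ref{theorem_spectrum_general}, and you have spelled out precisely that translation, including the multiplicity bookkeeping ($r=2$ in three dimensions, hence the factor $4$) and the identification $\inf\sigma_\mathrm{ess}(-\Delta_\mathrm{D}^\Omega)=\mu_1$ via Theorem~\ref{thm:spectess3D}(i). There is nothing to add or correct.
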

The result again comes from \cite[Thm.~3.1]{EK15}. Other spectral features of Laplacians such as the Lieb-Thirring-type inequality for the moment of the sequence $\{\mu_1-\epsilon_n\}\,$ \cite[Thm.~3.2]{EK15}, where $\epsilon_n$ are the nonnegative eigenvalues of $-\Delta_\mathrm{D}^\Omega$, translate to the Dirac operator setting as well.

Passing from curved strips and tubes to straight but locally deformed ones, one can adapt for our purpose Theorem~1.4 of \cite{EK15}:
\begin{theorem} \label{thm:locdef}
Let $\Omega$ be the locally deformed tube \eqref{deftube} satisfying~\eqref{assf} and~\eqref{assg}. The discrete spectrum of $H_{m,\Omega}$ is empty if $M_x\subset M$ for all $x\in\R$. On the other hand, if $M_x\supset M$ for each $x\in\R$ and there is an interval where $M_x\setminus M$ has a nonzero measure, then $\sigma_\mathrm{disc}(H_{m,\Omega}) \ne\emptyset$.
\end{theorem}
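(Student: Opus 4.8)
The plan is to reduce the claim entirely to the corresponding statement for the Dirichlet Laplacian $-\Delta_\mathrm{D}^\Omega$ via Theorem~\ref{theorem_spectrum_general}, and then to invoke \cite[Thm.~1.4]{EK15}. By Theorem~\ref{theorem_spectrum_general}, the spectrum of $H_{m,\Omega}$ is $\{mc^2\}\cup\{\pm c\sqrt{\lambda+(mc)^2}:\lambda\in\sigma(-\Delta_\mathrm{D}^\Omega)\}$, and by Theorem~\ref{thm:spectess3D}(ii) the essential spectrum of $H_{m,\Omega}$ equals $(-\infty,-\epsilon_\mathrm{t}]\cup\{mc^2\}\cup[\epsilon_\mathrm{t},\infty)$ with $\epsilon_\mathrm{t}=c\sqrt{m^2c^2+\mu_1}$, $\mu_1$ being the principal eigenvalue of $-\Delta_\mathrm{D}^M$. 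Since the map $\lambda\mapsto c\sqrt{\lambda+(mc)^2}$ is strictly increasing on $[0,\infty)$ and maps $\mu_1$ to $\epsilon_\mathrm{t}$, a point $\pm c\sqrt{\lambda+(mc)^2}$ with $\lambda\ge 0$ lies in $\sigma_\mathrm{disc}(H_{m,\Omega})=\sigma(H_{m,\Omega})\setminus\sigma_\mathrm{ess}(H_{m,\Omega})$ precisely when $\lambda<\mu_1$ and $\lambda$ is an isolated eigenvalue of finite multiplicity of $-\Delta_\mathrm{D}^\Omega$, i.e. when $\lambda\in\sigma_\mathrm{disc}(-\Delta_\mathrm{D}^\Omega)$. (One also uses part (iii) of Theorem~\ref{theorem_spectrum_general} for the finiteness/multiplicity bookkeeping, and part (i) to note that $mc^2$ is in the essential spectrum, hence never a discrete eigenvalue.) Therefore $\sigma_\mathrm{disc}(H_{m,\Omega})\ne\emptyset$ if and only if $\sigma_\mathrm{disc}(-\Delta_\mathrm{D}^\Omega)\ne\emptyset$.

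With this equivalence in hand, the two assertions follow directly. If $M_x\subset M$ for all $x\in\R$, then $\Omega\subset\Omega_0=\R\times M$ and a domain-monotonicity argument for the Dirichlet Laplacian shows $\inf\sigma(-\Delta_\mathrm{D}^\Omega)\ge\inf\sigma(-\Delta_\mathrm{D}^{\Omega_0})=\mu_1=\inf\sigma_\mathrm{ess}(-\Delta_\mathrm{D}^\Omega)$, so there is no spectrum below the essential spectrum threshold and $\sigma_\mathrm{disc}(-\Delta_\mathrm{D}^\Omega)=\emptyset$; this is exactly the first half of \cite[Thm.~1.4]{EK15}. Conversely, if $M_x\supset M$ for every $x$ and $M_x\setminus M$ has positive measure on some interval of the longitudinal variable, then $\Omega$ is a genuine outward local deformation of the straight tube, and the Goldstone--Jaffe--type variational argument recalled in \cite[Thm.~1.4]{EK15} produces a trial function whose Rayleigh quotient lies strictly below $\mu_1$, giving $\inf\sigma(-\Delta_\mathrm{D}^\Omega)<\mu_1$ and hence a nonempty discrete spectrum. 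Translating both conclusions through the spectral correspondence yields the statement for $H_{m,\Omega}$.

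The only genuine point requiring care is not analytic but bookkeeping: one must make sure that the value $mc^2$, which sits inside $\sigma_\mathrm{ess}(H_{m,\Omega})$ by Theorem~\ref{theorem_spectrum_general}(i), is correctly excluded when one speaks of the ``discrete spectrum'', and that the lowest Dirichlet eigenvalue $\lambda=0$ (if it occurred) would correspond precisely to the point $mc^2$; since $0\notin\sigma_\mathrm{disc}(-\Delta_\mathrm{D}^\Omega)$ here this causes no trouble, but it should be mentioned so that the mirror-symmetric structure $\sigma_\mathrm{disc}(H_{m,\Omega})=\{\pm\lambda_j\}$ is literally justified. I expect the main (and essentially the only) nontrivial input to be the cited variational existence argument of \cite[Thm.~1.4]{EK15}; everything else is a transcription through the explicit spectral map of Theorem~\ref{theorem_spectrum_general}. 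Concretely, I would write: ``In view of Theorem~\ref{theorem_spectrum_general} and Theorem~\ref{thm:spectess3D}(ii), $\sigma_\mathrm{disc}(H_{m,\Omega})\ne\emptyset$ if and only if $\sigma_\mathrm{disc}(-\Delta_\mathrm{D}^\Omega)\ne\emptyset$, and the two claims about $-\Delta_\mathrm{D}^\Omega$ are the content of \cite[Thm.~1.4]{EK15}.''
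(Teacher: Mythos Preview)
Your proposal is correct and follows exactly the paper's approach: the paper does not even include a formal proof environment for this theorem, merely prefacing the statement with ``one can adapt for our purpose Theorem~1.4 of \cite{EK15}'', which is precisely the reduction via Theorem~\ref{theorem_spectrum_general} (and Theorem~\ref{thm:spectess3D}(ii)) that you spell out. Your final one-sentence summary is essentially what the paper's argument amounts to; the additional detail you provide on domain monotonicity, the variational trial function, and the bookkeeping around $mc^2$ is accurate but more than the paper itself offers.
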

No general existence claim can be made if the deformation of a three-dimensional tube is `sign changing', however, in analogy with the two-dimensional case of Theorem~\ref{thm:weaklocstripdef} one naturally conjectures that for gentle deformations the positivity of the added volume will be decisive for the existence of a mirror pair of weakly bound states of $H_{m,\Omega}$.

Let us turn to twisted tubes. In contrast to bending, twisting gives rise to effective repulsion, and as such stabilizes the spectrum. Theorem~1.8 of \cite{EK15} yields the following result which is useful to compare to Theorem~\ref{thm:benttube} above:
\begin{theorem} \label{thm:stabspect}
Let $\Omega$ satisfy assumption \eqref{assc} and \eqref{asse}. Assume further that the cross section $M$ has a $C^2$ boundary and is \emph{not} a disc centered at the origin. Moreover, suppose that $\alpha$ in \eqref{curvilin} is a continuously differentiable function which \emph{violates the condition} $\dot\alpha=\tau$ such that $\dot \alpha$ is compactly supported, not identically equal to zero and has a bounded derivative. Then there is an $\varepsilon>0$ such $\|\gamma\|_\infty + \|\dot\gamma\|_\infty<\varepsilon$ implies $\sigma_\mathrm{disc}(H_{m,\Omega}) =\emptyset$.
\end{theorem}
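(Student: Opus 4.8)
The plan is to transfer the problem to the Dirichlet Laplacian via Theorem~\ref{theorem_spectrum_general} and then to quote the Hardy inequality for twisted tubes from~\cite{EK15}. First I would record the spectral dictionary in the form needed here. Since $-\Delta_\mathrm{D}^\Omega$ is a non-negative operator with trivial kernel whose essential spectrum is bounded away from $0$, one has $\sigma(-\Delta_\mathrm{D}^\Omega)\subset(0,\infty)$, so the map $\lambda\mapsto c\sqrt{\lambda+(mc)^2}$ takes $\sigma(-\Delta_\mathrm{D}^\Omega)$ homeomorphically onto $\sigma(H_{m,\Omega})\cap(mc^2,\infty)$ and, symmetrically, $\lambda\mapsto -c\sqrt{\lambda+(mc)^2}$ onto $\sigma(H_{m,\Omega})\cap(-\infty,-mc^2)$, while by Theorem~\ref{theorem_spectrum_general} the one remaining spectral point $mc^2$ lies in $\sigma_\mathrm{ess}(H_{m,\Omega})$ and $-mc^2$ is not an eigenvalue for $m\neq0$. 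Combined with part~(iii) of that theorem, which identifies multiplicities, this shows that $\pm c\sqrt{\lambda+(mc)^2}$ is an isolated eigenvalue of finite multiplicity of $H_{m,\Omega}$ precisely when $\lambda$ is one of $-\Delta_\mathrm{D}^\Omega$. Hence $\sigma_\mathrm{disc}(H_{m,\Omega})=\emptyset$ is equivalent to $\sigma_\mathrm{disc}(-\Delta_\mathrm{D}^\Omega)=\emptyset$, i.e. to $\inf\sigma(-\Delta_\mathrm{D}^\Omega)=\inf\sigma_\mathrm{ess}(-\Delta_\mathrm{D}^\Omega)$, and it is this statement about the bent twisted tube that has to be established.

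Next I would put $-\Delta_\mathrm{D}^\Omega$ into its standard geometric model. By the injectivity assumption~\eqref{asse} we have $\Omega=x(\R\times M)$ with the $C^2$ cross section $M$; pulling the operator back through the map~\eqref{curvilin} with the metric~\eqref{tube metric t} --- which is genuinely non-diagonal, its off-diagonal entries $r^2(\tau-\dot\alpha)$ being nonzero precisely because the Tang condition is violated --- and applying the usual unitary transformation that removes the Jacobian factor $g^{1/4}$, one arrives on $L^2(\R\times M)$ at an operator of the type studied in~\cite[Sec.~1.3]{EK15}: the straight-tube Laplacian perturbed by the twisting first-order term built from $\tau-\dot\alpha$ and by a curvature-induced effective potential whose size is controlled, uniformly, by $\|\gamma\|_\infty+\|\dot\gamma\|_\infty$, thanks to assumption~\eqref{assc} (decay of $\gamma$ and $\dot\gamma$, boundedness of $\tau,\dot\tau,\ddot\tau$) and the boundedness of $\dot\alpha$ and $\ddot\alpha$.

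The heart of the proof is then~\cite[Thm.~1.8]{EK15}: under exactly the hypotheses at hand --- $\partial M$ of class $C^2$, $M$ not a disc centered at the origin, and a twisting $\dot\alpha$ that is compactly supported, bounded together with its derivative, not identically zero, and distinct from $\tau$ --- the twisted tube obeys a genuine Hardy inequality, so that the straight tube obtained by setting $\gamma\equiv0$ satisfies $-\Delta_\mathrm{D}^{\Omega}-\inf\sigma_\mathrm{ess}(-\Delta_\mathrm{D}^{\Omega})\ge0$ with, in fact, a strictly positive (though decaying) lower bound in the form sense, and this gain survives a sufficiently gentle bend. Quantitatively,~\cite[Thm.~1.8]{EK15} supplies an $\varepsilon>0$ such that $\|\gamma\|_\infty+\|\dot\gamma\|_\infty<\varepsilon$ forces $\inf\sigma(-\Delta_\mathrm{D}^\Omega)=\inf\sigma_\mathrm{ess}(-\Delta_\mathrm{D}^\Omega)$, hence $\sigma_\mathrm{disc}(-\Delta_\mathrm{D}^\Omega)=\emptyset$, and feeding this back through the first step gives $\sigma_\mathrm{disc}(H_{m,\Omega})=\emptyset$. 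The one genuinely delicate point --- the rest being bookkeeping --- is that the bending estimate has to be made uniform over all admissible curvature profiles and weighed against the twisting-generated Hardy weight in a quantitatively sharp way; this is exactly the competition carried out in~\cite[Thm.~1.8]{EK15}, so the only remaining task is to check that our standing assumptions~\eqref{assc} and~\eqref{asse} together with the conditions imposed on $M$ and $\alpha$ are a special case of the hypotheses used there.
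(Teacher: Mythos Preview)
Your proposal is correct and follows precisely the route the paper takes: the paper's proof consists of the single observation that the result is obtained from Theorem~1.8 of \cite{EK15} via the spectral correspondence of Theorem~\ref{theorem_spectrum_general}. You have merely written out in more detail the two ingredients the paper leaves implicit --- the dictionary $\sigma_\mathrm{disc}(H_{m,\Omega})=\emptyset \Leftrightarrow \sigma_\mathrm{disc}(-\Delta_\mathrm{D}^\Omega)=\emptyset$ and the verification that the standing hypotheses match those of \cite[Thm.~1.8]{EK15} --- so there is no substantive difference in approach.
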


By Theorem~\ref{thm:spectess3D}(iv) a periodic twist changes the essential spectrum of $H_{m,\Omega}$. Given the fact that the effective repulsion grows stronger with the twist velocity $\beta$, one may expect that a local change of the twist of the correct sign could give rise to the existence of bound states. This is indeed the case. Consider a straight twisted tube \eqref{twisttube} with of the cross section $M$ which is not a disc centered at the origin, has a $C^2$-boundary, and moreover,
 \begin{equation} \label{loctwist}
 \dot\alpha(x_1) = \beta - \delta(x_1),
 \end{equation}
where $\delta:\R\to\R$ is a bounded function supported in an interval $[-a,a]$ for some $a>0$.
\begin{theorem} \label{thm:loctwist}
In the described situation, $\sigma_\mathrm{disc}(H_{m,\Omega}) \ne\emptyset$ holds provided
 \begin{equation} \label{negtwist}
 \int_\R \big(\dot\alpha^2(x_1) - \beta^2)\,\D x_1 < 0.
 \end{equation}
Moreover, the claim remains valid even when the integral \eqref{negtwist} vanishes provided that $\dot\alpha(x_1) + \beta>0$ holds whenever $|x_1|\le a$ and $\ddot\alpha\in L^2(-a,a)$.
\end{theorem}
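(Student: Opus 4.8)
The plan is to push the whole question through Theorem~\ref{theorem_spectrum_general} down to the Dirichlet Laplacian and then to settle the resulting statement by a Goldstone--Jaffe type variational argument. Since $\lambda\mapsto c\sqrt{\lambda+(mc)^2}$ is an increasing homeomorphism of $[0,\infty)$ onto $[mc^2,\infty)$ carrying $\sigma_\mathrm{ess}(-\Delta_\mathrm{D}^\Omega)$ onto $\sigma_\mathrm{ess}(H_{m,\Omega})\cap[mc^2,\infty)$ and eigenvalues to eigenvalues of the same multiplicity, Theorem~\ref{theorem_spectrum_general} gives $\sigma_\mathrm{disc}(H_{m,\Omega})\neq\emptyset$ if and only if $-\Delta_\mathrm{D}^\Omega$ has spectrum strictly below $\inf\sigma_\mathrm{ess}(-\Delta_\mathrm{D}^\Omega)$; and by Theorem~\ref{thm:spectess3D}(iv) (whose proof uses the direct integral \eqref{dirint} and stability of the essential spectrum under the compactly supported perturbation $\delta$) that threshold equals $\mathcal{E}_0:=\inf\sigma(-\Delta_\mathrm{D}^M-\beta^2\pd_\varphi^2)$. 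Exactly as in the proof of Theorem~\ref{thm:spectess3D}, the rotation $R_\alpha$ unitarily identifies $-\Delta_\mathrm{D}^\Omega$ with $H_{\dot\alpha}=-\Delta_\mathrm{D}^M-(\pd_{x_1}+\dot\alpha\pd_\varphi)^2$ on $L^2(\R\times M)$, with quadratic form $q_{\dot\alpha}[\phi]=\|(\pd_{x_1}+\dot\alpha\pd_\varphi)\phi\|^2+\|\nabla_y\phi\|^2$, $\nabla_y$ the transverse gradient; so it suffices to exhibit $\phi$ in the form domain with $q_{\dot\alpha}[\phi]<\mathcal{E}_0\|\phi\|^2$.

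I would use the product ansatz $\phi(x_1,y)=\psi(x_1)\chi_1(y)$, where $\chi_1>0$ is the normalised ground state of $h_0:=-\Delta_\mathrm{D}^M-\beta^2\pd_\varphi^2$, so $\mathcal{E}_0=\|\nabla_y\chi_1\|_{L^2(M)}^2+\beta^2\|\pd_\varphi\chi_1\|_{L^2(M)}^2$, and $\psi\in H^1(\R)$ has compact support. Since $\int_M\chi_1\,\pd_\varphi\chi_1=\tfrac12\int_M\pd_\varphi(\chi_1^2)=0$ (the rotational field is divergence free and $\chi_1$ vanishes on $\pd M$), the mixed term drops out and a short computation yields
\begin{equation*}
q_{\dot\alpha}[\phi]-\mathcal{E}_0\|\phi\|^2=\|\psi'\|_{L^2(\R)}^2+c_0\int_\R\big(\dot\alpha(x_1)^2-\beta^2\big)\,|\psi(x_1)|^2\,\D x_1,\qquad c_0:=\|\pd_\varphi\chi_1\|_{L^2(M)}^2 .
\end{equation*}
Here $c_0>0$: if $\pd_\varphi\chi_1\equiv0$ then $\chi_1$ is a positive, rotationally symmetric Dirichlet eigenfunction of $-\Delta_\mathrm{D}^M$, which forces $M$ to be a disc centred at the origin, contrary to assumption. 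Hence the whole problem reduces to showing that the one-dimensional Schr\"odinger operator $L:=-\frac{\D^2}{\D x_1^2}+U$ on $L^2(\R)$, with $U:=c_0(\dot\alpha^2-\beta^2)$ bounded and supported in $[-a,a]$ (so $\inf\sigma_\mathrm{ess}(L)=0$), satisfies $\inf\sigma(L)<0$.

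For this one-dimensional operator there are two regimes. If $\int_\R(\dot\alpha^2-\beta^2)<0$, take $g\in C_0^\infty(\R)$ with $0\le g\le1$ and $g\equiv1$ on $[-1,1]$; then for $n\ge a$ one has $\langle L\,g(\cdot/n),g(\cdot/n)\rangle=n^{-1}\|g'\|^2+c_0\int_\R(\dot\alpha^2-\beta^2)$, which is negative once $n$ is large. In the critical case $\int_\R(\dot\alpha^2-\beta^2)=0$, the additional hypotheses are precisely what makes a refined trial function work: $\ddot\alpha\in L^2(-a,a)$ renders $U$ continuous, while $\dot\alpha+\beta>0$ on $[-a,a]$ together with $\delta\not\equiv0$ forces $\dot\alpha^2-\beta^2=-\delta(\dot\alpha+\beta)$ to change sign, so there is a nonnegative $h\in C_0^\infty(\R)$ with $\int_\R Uh=:-m_1<0$. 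Testing $L$ on $\psi_{n,\varepsilon}:=g(\cdot/n)+\varepsilon h$ gives, for $n$ large, $\langle L\psi_{n,\varepsilon},\psi_{n,\varepsilon}\rangle=n^{-1}\|g'\|^2-2\varepsilon m_1+\varepsilon^2\big(\|h'\|^2+\int_\R Uh^2\big)$, which is made negative by first fixing $\varepsilon$ small and then $n$ large. Either way $\inf\sigma(L)<0$, and by the previous steps $\sigma_\mathrm{disc}(H_{m,\Omega})\neq\emptyset$; the analogous statement for the Dirichlet Laplacian is recorded in \cite{EK15}.

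The two soft ingredients are the unitary straightening of the twist (already carried out in the proof of Theorem~\ref{thm:spectess3D}) and the reduction to a scalar Schr\"odinger operator through the product ansatz. I expect the genuine obstacle to be the critical case $\int_\R(\dot\alpha^2-\beta^2)=0$: there the product ansatz alone produces only the positive remainder $O(n^{-1})$, so one must add a correction term and extract a strictly negative lower-order contribution, and it is exactly at this point that the sign condition $\dot\alpha+\beta>0$ on $[-a,a]$ and the regularity $\ddot\alpha\in L^2(-a,a)$ are consumed. A secondary technical point is justifying the essential-spectrum threshold $\mathcal{E}_0$ when $\delta$ is merely bounded, which is handled at the level of quadratic forms.
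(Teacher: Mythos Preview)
Your argument is correct. The paper's own proof consists of the single observation that, after the reduction to $-\Delta_\mathrm{D}^\Omega$ via Theorem~\ref{theorem_spectrum_general}, the two claims are exactly Theorems~1.9 and~1.10 in \cite{EK15}; you carry out the same reduction and then supply a self-contained variational argument for those cited results.

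Your packaging differs slightly from what one finds in \cite{EK15}. There the critical case is handled by a trial function built directly in $L^2(\R\times M)$, with a corrector that couples to excited transverse modes, and the hypothesis $\ddot\alpha\in L^2(-a,a)$ enters through an integration by parts in the cross terms. You instead project entirely onto the transverse ground state $\chi_1$, which cleanly collapses the problem to the one-dimensional Schr\"odinger operator $-\tfrac{\D^2}{\D x_1^2}+c_0(\dot\alpha^2-\beta^2)$, and then invoke the standard Goldstone--Jaffe/Simon ``plateau plus bump'' trial function $g(\cdot/n)+\varepsilon h$. This is a genuine simplification: in your scheme the hypothesis $\ddot\alpha\in L^2$ is used only to obtain continuity of the effective potential so that a bump $h$ with $\int Uh<0$ is easy to write down, whereas the sign condition $\dot\alpha+\beta>0$ already forces $U\not\equiv0$ and hence negativity of $U$ on a set of positive measure, which is all your argument truly needs. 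So your route recovers the theorem with the stated hypotheses and in fact shows that, at the level of the product ansatz, the regularity assumption on $\ddot\alpha$ is largely a convenience.
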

\begin{proof}
The two claims follow respectively from Theorems~1.9 and 1.10 in \cite{EK15}.
\end{proof}

Let us pass to operators $H_{m,\Omega}$ supported by layers. If $\Omega$ is curved but asymptotically planar, we have the following result:
\begin{theorem} \label{thm:curvedlayer}
Let $\Omega$ be a layer of halfwidth $a$ satisfying assumptions~\eqref{assh}--\eqref{assj}, then $\sigma_\mathrm{disc}(H_{m,\Omega}) \ne\emptyset$ holds if one of the following conditions is satisfied:
 \begin{enumerate}[(i)]
  \setlength{\itemsep}{0pt}
\item The total Gauss curvature satisfies $\KK\le 0$. This happens, in particular, if the generating surface $\Sigma$ is not conformally equivalent to the plane so that $\Omega$ is not simply connected.
\item The halfwidth $a$ is small enough and $\nabla_g M\in L^2_\mathrm{loc}(\Sigma)$, where $\nabla_g$ refers to covariant derivatives on the manifold $(\Sigma,g)$.
\item $\MM=\infty$ and $\nabla_g M\in L^2(\Sigma)$.
\item $\Sigma$ has a cylindrical symmetry; if $\KK>0$ we have $\#\sigma_\mathrm{disc}(H_{m,\Omega})=\infty$.
 \end{enumerate}
\end{theorem}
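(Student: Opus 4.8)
The plan is to deduce the statement entirely from Theorem~\ref{theorem_spectrum_general}, turning it into a question about the Dirichlet Laplacian $-\Delta_\mathrm{D}^\Omega$ on the curved layer. First I would record the elementary fact that $\lambda\mapsto c\sqrt{\lambda+(mc)^2}$ is a strictly increasing bijection of $[0,\infty)$ onto $[mc^2,\infty)$; combined with the spectral identity and part~(iii) of Theorem~\ref{theorem_spectrum_general} (with $r=2$ in dimension three), this shows that $\mu$ is an eigenvalue of $-\Delta_\mathrm{D}^\Omega$ of multiplicity $k$ exactly when $\pm c\sqrt{\mu+(mc)^2}$ are eigenvalues of $H_{m,\Omega}$ of multiplicity $2k$. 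Using Theorem~\ref{thm:spectess3D}(v) to identify $\inf\sigma_\mathrm{ess}(-\Delta_\mathrm{D}^\Omega)=\big(\frac{\pi}{2a}\big)^2$ under assumptions~\eqref{assh}--\eqref{assj}, the threshold of the essential spectrum of $H_{m,\Omega}$ is $\epsilon_\mathrm{t}=c\sqrt{m^2c^2+(\pi/2a)^2}$, and the isolated eigenvalues of $-\Delta_\mathrm{D}^\Omega$ below $(\pi/2a)^2$ correspond precisely to the discrete spectrum of $H_{m,\Omega}$ in the spectral gaps $(-\epsilon_\mathrm{t},-mc^2)$ and $(mc^2,\epsilon_\mathrm{t})$. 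In particular $\sigma_\mathrm{disc}(H_{m,\Omega})\ne\emptyset$ if and only if $-\Delta_\mathrm{D}^\Omega$ has an eigenvalue below $(\pi/2a)^2$, and the two discrete spectra are simultaneously finite or infinite, with $\#\sigma_\mathrm{disc}(H_{m,\Omega})=4\,\#\sigma_\mathrm{disc}(-\Delta_\mathrm{D}^\Omega)$ counting multiplicities.

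With this reduction, each of the four conditions is a known sufficient criterion for the Dirichlet Laplacian in an asymptotically planar layer to have spectrum below the transverse threshold $(\pi/2a)^2$, and I would invoke the corresponding results from Chapter~4 of \cite{EK15}: condition~(i), the nonpositivity $\KK\le 0$ of the total Gauss curvature --- which is forced when $\Sigma$ is not conformally the plane, i.e. $\Omega$ is not simply connected, by the Cohn--Vossen inequality --- is handled by the variational construction of a trial function with energy strictly below $(\pi/2a)^2$; conditions~(ii) and~(iii), formulated with $\nabla_g M\in L^2_\mathrm{loc}(\Sigma)$ and small $a$, respectively $\MM=\infty$ with $\nabla_g M\in L^2(\Sigma)$, are the thin-layer and the infinite-$L^2$-mean-curvature binding criteria; and condition~(iv), cylindrical symmetry, is the separation-of-variables case in which, when $\KK>0$, the Laplacian already has infinitely many eigenvalues below the threshold, whence by the multiplicity correspondence above $\#\sigma_\mathrm{disc}(H_{m,\Omega})=\infty$.

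The part requiring the most care, and which I would treat as the main obstacle, is the bookkeeping check that our assumptions~\eqref{assh}--\eqref{assj} on the layer parametrization \eqref{3Dcoord}--\eqref{layer} supply exactly the regularity, injectivity, asymptotic flatness and integrability hypotheses under which the quoted quadratic-form arguments and Birman--Schwinger-type estimates for $-\Delta_\mathrm{D}^\Omega$ in \cite{EK15} are valid, so that those results transfer without modification; in the cylindrically symmetric case one must in addition verify that the fibre decomposition used there is compatible with the geodesic polar description \eqref{Jacobi}. Once this is confirmed, no further analysis is needed and the theorem follows.
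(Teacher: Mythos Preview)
Your proposal is correct and follows essentially the same route as the paper: reduce via Theorem~\ref{theorem_spectrum_general} to the Dirichlet Laplacian on the layer and then quote the relevant results from Chapter~4 of \cite{EK15} (the paper cites Theorem~4.2 for (i)--(iii), the Cohn--Vossen inequality for the simple-connectedness remark, and Theorems~4.3--4.4 for (iv)). One small point of care: you invoke Theorem~\ref{thm:spectess3D}(v) to pin down the essential spectrum threshold, but that item carries extra hypotheses (decay of $(h_\mu^{\:\:\nu})_{,\rho}$ and $k_g$) not assumed in the present theorem; what you actually need is only $\inf\sigma_\mathrm{ess}(-\Delta_\mathrm{D}^\Omega)\ge(\pi/2a)^2$, which holds under \eqref{assh}--\eqref{assi} alone by \cite[Prop.~4.2.1]{EK15}, so cite that directly instead.
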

\begin{proof}
The first three claims follow from \cite[Thm.~4.2]{EK15}. In particular, the first one provides a universal existence result for layers which are not simply connected. This follows from Cohn-Vossen inequality, $\KK\le2\pi(2-2h-e)$, where $e$ is the number of ends of $\Sigma$ and $h$ is its genus, i.e. the number of `handles'; once it is nonzero, we have always $\KK<0$, cf. \cite[Corollary~4.2.1]{EK15}. The last claim is a consequence of Theorems~4.3 and~4.4 in \cite{EK15} and a corollary of the former.
\end{proof}
\begin{remark}
{\rm The trial functions used to establish claim (iv) of the last theorem have compactly supports which can be chosen arbitrarily far from the symmetry axis. This shows that the result remains valid if such a cylindrically layer is locally deformed. An example is a \emph{conical layer}, cf.~\cite[Example~4.2.3]{EK15} and \cite{ET10}.}
\end{remark}
The claim (iv) shows an important difference between curvature-induced bound states in tubes and layers. The former case is of a local character, while for layers the global geometry plays a role. This is connected with the fact that a tube can fully be `straightened' using curvilinear coordinates \eqref{curvilin}, while in layers the metric tensor of $\Sigma$ remains always present. It also means that if we want a weak-coupling result analogous to the last claim of Theorem~\ref{thm:benttube}, we have to restrict our attention to locally curved layers for which $\KK=0$. Consider, for instance, the family of surfaces
 \begin{equation} \label{wekdefsurf}
 \Sigma_\beta:= p(\R^2),\qquad p(x;\beta) = \big(x,\beta f(x)\big),
 \end{equation}
where $p:\R^2\to\R$ is a given $C^4$-smooth function. For simplicity we suppose that $f \not\equiv 0$ is of a compact support, the conclusions extend to situations where $f$ together with its derivatives up to the fourth order has suitable decay properties at infinity \cite[Thm.~6.4]{EK15}. We have the following claim:
\begin{theorem} \label{thm:curvlayer}
Let $\{\Omega_\beta\}$ be a family of layers of halfwidth $a$ built over the surfaces \eqref{wekdefsurf}. Then $H_{m,\Omega_\beta}$ has for all $\beta$ small enough exactly one pair of discrete eigenvalues $\pm\lambda_1(\beta)$, each having multiplicity two, behaving as
 \begin{equation} \label{curvlayer}
 \lambda_1(\beta) = c\sqrt{m^2c^2+\big(\textstyle{\frac{\pi}{2a}}\big)^2 -\ee^{2w(\beta)^{-1}}},
 \end{equation}
where
 $$ 
 w(\beta) = -\beta^2 \sum_{n=2}^\infty (\chi_1, u\chi_n)^2 \Big(\frac{\pi}{2a}\Big)^4 (n^2-1)^2 \int_{\R^2} \frac{|\hat m_0(\omega)|^2}{|\omega|^2 + \big(\frac{\pi}{2a}\big)^2 (n^2-1)}\, \D\omega + \OO(\beta^3)\,;
 $$ 
here $\{\chi_n\}$ are the normalized eigenfunctions of the Dirichlet Laplacian on $(-a,a)$, moreover, $(\cdot, \cdot)$ is the inner product in $L^2(-a,a)$ and $\hat m_0$ is the Fourier image of $m_0:=\frac12\Delta f$.
\end{theorem}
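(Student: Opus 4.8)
The plan is to use Theorem~\ref{theorem_spectrum_general} to turn the assertion into a weak-coupling statement for the Dirichlet Laplacian $-\Delta_\mathrm{D}^{\Omega_\beta}$ and then to invoke the corresponding result for layers over weakly deformed surfaces, cf.\ \cite[Thm.~6.4]{EK15}. For $d=3$ Theorem~\ref{theorem_spectrum_general} has $r=2$ and gives a mirror-symmetric discrete spectrum, so it suffices to show that for all small $\beta>0$ the operator $-\Delta_\mathrm{D}^{\Omega_\beta}$ has exactly one eigenvalue $\epsilon_1(\beta)$, which is simple, below $\inf\sigma_\mathrm{ess}(-\Delta_\mathrm{D}^{\Omega_\beta})$, and that $(\tfrac{\pi}{2a})^2-\epsilon_1(\beta)=\ee^{2/w(\beta)}$; then $\lambda_1(\beta)=c\sqrt{(mc)^2+\epsilon_1(\beta)}$ and $\pm\lambda_1(\beta)$ are eigenvalues of $H_{m,\Omega_\beta}$ of multiplicity $2$. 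Since $f$ is $C^4$-smooth with compact support, for small $\beta$ the graph $\Sigma_\beta=\{(x,\beta f(x)):x\in\R^2\}$ satisfies \eqref{assh}--\eqref{assj}: its principal curvatures are uniformly $\OO(\beta)$, so $a<\rho_m$ and \eqref{3Dcoord} is injective; it coincides with a plane outside a compact set, so $K,M\to0$ at infinity and $K\in L^1(\Sigma_0,\D\sigma)$; and, by Gauss--Bonnet applied to large geodesic discs (the surface being planar at infinity), the total Gauss curvature vanishes, $\KK=0$. By Theorem~\ref{thm:spectess3D}(v) we then have $\inf\sigma_\mathrm{ess}(-\Delta_\mathrm{D}^{\Omega_\beta})=(\tfrac{\pi}{2a})^2$.

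Next I would straighten the layer. Using \eqref{3Dcoord} over $\R^2\times(-a,a)$ and the unitary map $\psi\mapsto G^{1/4}(\psi\circ x)$, $G$ the Jacobian of \eqref{3Dcoord}, the operator $-\Delta_\mathrm{D}^{\Omega_\beta}$ is unitarily equivalent to a Schr\"odinger-type operator $B_\beta$ on $L^2(\R^2\times(-a,a))$ with Dirichlet conditions at $u=\pm a$; since $\Sigma_\beta$ is planar at infinity, $B_\beta-B_0$, with $B_0=-\Delta_\mathrm{D}^{\R^2\times(-a,a)}$, is a form-bounded perturbation supported, in the base variable, in a fixed compact set, and $B_\beta=B_0+\beta B^{(1)}+\beta^2 B^{(2)}+\OO(\beta^3)$ with $B^{(1)},B^{(2)}$ explicit differential operators in $f$; the mean curvature of $\Sigma_\beta$ is $\beta m_0+\OO(\beta^2)$ with $m_0=\tfrac12\Delta f$. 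A direct computation shows that $B^{(1)}$ is odd under $u\mapsto-u$, so, with $\chi_n(u)=a^{-1/2}\sin\big(\tfrac{\pi n}{2a}(u+a)\big)$ the transverse Dirichlet eigenfunctions (eigenvalues $(\tfrac{\pi}{2a})^2n^2$, gaps $\varrho_n^2:=(\tfrac{\pi}{2a})^2(n^2-1)$, $\chi_1$ being even), the diagonal block $\langle\chi_1,B^{(1)}\chi_1\rangle$ vanishes — there is no binding at order $\beta$ — while $\langle\chi_1,B^{(1)}\chi_n\rangle$ is nonzero only for even $n$ and is, up to first-order operators in $m_0$, controlled by the transverse overlaps $(\chi_1,u\chi_n)$.

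The heart of the argument is a Birman--Schwinger reduction. Set $\epsilon=(\tfrac{\pi}{2a})^2-\kappa^2$, $\kappa\to0^+$, split $L^2=(L^2(\R^2)\otimes\chi_1)\oplus(\cdot)^\perp$, and form the Birman--Schwinger operator for $B_\beta$ at energy $\epsilon$. In the $\chi_1$-channel the free resolvent is $(-\Delta_x+\kappa^2)^{-1}$, logarithmically divergent with leading part $-\tfrac1{2\pi}\ln\tfrac1\kappa$, whereas the channels $n\ge2$ contribute the bounded resolvents $(-\Delta_x+\kappa^2+\varrho_n^2)^{-1}$. Hence the Birman--Schwinger operator has the form $-\tfrac1{2\pi}\ln\tfrac1\kappa\,\Pi_\beta$ plus a bounded operator plus $o(1)$, with $\Pi_\beta$ asymptotically rank one, and the standard two-dimensional weak-coupling analysis, as carried out in \cite[Thm.~6.4]{EK15}, produces a unique simple solution $\kappa(\beta)>0$ with $\ln\tfrac1{\kappa(\beta)^2}=-\tfrac2{w(\beta)}+o(\beta^{-2})$, i.e.\ $(\tfrac{\pi}{2a})^2-\epsilon_1(\beta)=\ee^{2/w(\beta)}$. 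The strength of $\Pi_\beta$ is the order-$\beta^2$ effective coupling, and here two things enter: the local part — from $\langle\chi_1,B^{(2)}\chi_1\rangle$ and the transverse-averaged geometric potential — integrates over the base to a multiple of the total Gauss curvature $\KK$, hence to zero for the graphs \eqref{wekdefsurf}; and the second-order contribution of $B^{(1)}$ through the channels $n\ge2$, which after reorganization (integration by parts in $u$ and use of $-\chi_n''=(\tfrac{\pi n}{2a})^2\chi_n$) equals $-\beta^2\sum_{n\ge2}(\chi_1,u\chi_n)^2(\tfrac{\pi}{2a})^4(n^2-1)^2\langle m_0,(-\Delta_x+\varrho_n^2)^{-1}m_0\rangle$. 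Rewriting the last inner product as $\int_{\R^2}|\hat m_0(\omega)|^2\big(|\omega|^2+\varrho_n^2\big)^{-1}\,\D\omega$ reproduces $w(\beta)+\OO(\beta^3)$, the $\OO(\beta^3)$ error being controlled because $B_\beta-B_0-\beta B^{(1)}-\beta^2 B^{(2)}$ is form-compact of size $\OO(\beta^3)$ with fixed compact base support, by the $C^4$ regularity of $f$. Transferring back through Theorem~\ref{theorem_spectrum_general} gives \eqref{curvlayer}, with $\pm\lambda_1(\beta)$ each of multiplicity two.

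The main obstacle is the Birman--Schwinger bookkeeping of the last step: isolating the single logarithmic singularity and showing it is carried by an asymptotically rank-one term; verifying that the local geometric potential enters the effective coupling only through $\KK$, so that it contributes nothing for the graphs \eqref{wekdefsurf}; and getting the constants right, notably the transverse weights $(\tfrac{\pi}{2a})^4(n^2-1)^2$. As this analysis is precisely the content behind \cite[Thm.~6.4]{EK15}, in the present setting the genuine work consists of the reduction via Theorem~\ref{theorem_spectrum_general} and of checking that $\Sigma_\beta$ satisfies the geometric hypotheses with $\KK=0$.
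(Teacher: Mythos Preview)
Your proposal is correct and follows the same approach as the paper: reduce via Theorem~\ref{theorem_spectrum_general} to the Dirichlet Laplacian and invoke \cite[Thm.~6.4]{EK15}. In fact the paper gives no separate proof for this theorem at all, merely citing \cite[Thm.~6.4]{EK15} in the text preceding the statement; your sketch of the underlying Birman--Schwinger mechanism and the verification that $\KK=0$ for the graph surfaces is more detailed than what the paper itself provides.
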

\begin{remark}
{\rm Returning to the dimensional consideration of Remark~\ref{rm:dim} we note that $w(\beta)$ is dimensionless as it should be. On the other hand, Theorem~6.4 of \cite{EK15} on which we rely is again stated as a mathematical result; properly speaking the last term in the square root of \eqref{curvlayer} should read $-\frac{1}{L^2}\,\ee^{2w(\beta)^{-1}}$, where $L$ is the quantity fixing the length scale. This does not change the principal conclusion, namely that the weak-coupling behavior of these bound states is the same as for two-dimensional Schr\"odinger operators, with an exponential small gap. The same conclusions can be made about the weak-coupling gap in Theorems~\ref{thm:weaklocdef} and \ref{thm:weaklater} below.}
\end{remark}

In contrast to the two-dimensional situation little is known about spectral properties of sharply broken layers. Using the result of \cite{DLO18} we can make a claim about the `octant', or `Fichera' layer, which can be regarded as a counterpart to the L-shaped strip, namely the region
$\Omega:= \{(x,y,z)\in\R^2: x,y,z>0,\, \min(x,y,z)<\pi\}$.
\begin{proposition} \label{prop:octant}
One has $\sigma_\mathrm{ess}(H_{m,\Omega}) = (-\infty,-c\sqrt{m^2c^2+\epsilon_\infty}] \cup \{mc^2\} \cup [c\sqrt{m^2c^2+\epsilon_\infty},\infty)$, where $\epsilon_\infty\approx 0.93$ refers to the L-shaped planar strip of the width $\pi$ in Proposition~\ref{prop:Lshape}. The discrete spectrum of $H_{m,\Omega}$ consists at most of a finite number\footnote{In \cite{DLO18} a numerical argument is used to show that the discrete spectrum is nonempty.} of eigenvalues of the form $\pm c\sqrt{m^2c^2+\epsilon_j}$ with $\epsilon_j\in(0,\epsilon_\infty)$.
\end{proposition}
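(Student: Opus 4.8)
The plan is to read off both assertions from the abstract spectral correspondence of Theorem~\ref{theorem_spectrum_general} once the spectrum of the Dirichlet Laplacian $-\Delta_\mathrm{D}^\Omega$ on the octant layer $\Omega$ is known, the latter being supplied by \cite{DLO18}. In other words, this is the three-dimensional analogue of the reasoning behind Proposition~\ref{prop:Lshape}, only with a reference result in place of the explicit L-strip computation.

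First I would isolate the geometric input. The region $\Omega = \{(x,y,z)\in\R^3: x,y,z>0,\ \min(x,y,z)<\pi\}$ is a local perturbation of the union of three mutually perpendicular flat slabs of width $\pi$: along each of its three edges it is asymptotically the Cartesian product of the planar L-shaped strip of width $\pi$ from Proposition~\ref{prop:Lshape} with a half-line, whereas along its three faces it is asymptotically the product of $(0,\pi)$ with a quadrant, which contributes only the larger threshold $(\pi/\pi)^2 = 1$. Consequently, as established in \cite{DLO18}, the Dirichlet Laplacian satisfies $\sigma_\mathrm{ess}(-\Delta_\mathrm{D}^\Omega) = [\epsilon_\infty,\infty)$, where $\epsilon_\infty$ is the bottom of the spectrum of the Dirichlet Laplacian on that L-shaped strip, i.e. precisely the eigenvalue $\epsilon_1\approx 0.9291$ of Proposition~\ref{prop:Lshape}; moreover $\sigma_\mathrm{disc}(-\Delta_\mathrm{D}^\Omega)$ is finite and contained in $(0,\epsilon_\infty)$, so that $\sigma(-\Delta_\mathrm{D}^\Omega) = [\epsilon_\infty,\infty) \cup \{\epsilon_1,\dots,\epsilon_n\}$ with $\epsilon_j\in(0,\epsilon_\infty)$ (in particular $0\notin\sigma(-\Delta_\mathrm{D}^\Omega)$).

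Second, I would feed this into Theorem~\ref{theorem_spectrum_general}. It gives $\sigma(H_{m,\Omega}) = \{mc^2\}\cup\{\pm c\sqrt{m^2c^2+\lambda}: \lambda\in\sigma(-\Delta_\mathrm{D}^\Omega)\}$, which equals $(-\infty,-c\sqrt{m^2c^2+\epsilon_\infty}]\cup\{mc^2\}\cup[c\sqrt{m^2c^2+\epsilon_\infty},\infty)$ together with the finitely many pairs $\pm c\sqrt{m^2c^2+\epsilon_j}$. It then remains to sort these points into the essential and the discrete part. Since $\lambda\mapsto c\sqrt{m^2c^2+\lambda}$ is a strictly increasing homeomorphism of $[0,\infty)$, a point $\pm c\sqrt{m^2c^2+\lambda}$ with $\lambda>0$ is isolated in $\sigma(H_{m,\Omega})$ if and only if $\lambda$ is isolated in $\sigma(-\Delta_\mathrm{D}^\Omega)$, and by Theorem~\ref{theorem_spectrum_general}(iii) it is then an eigenvalue of finite multiplicity, namely twice that of $\lambda$ since $d=3$; by Theorem~\ref{theorem_spectrum_general}(i) the point $mc^2$ belongs to $\sigma_\mathrm{ess}$. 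Hence the points $\pm c\sqrt{m^2c^2+\epsilon_j}$ with $\epsilon_j<\epsilon_\infty$ constitute exactly $\sigma_\mathrm{disc}(H_{m,\Omega})$, which is therefore finite, and everything else lies in $\sigma_\mathrm{ess}(H_{m,\Omega})$; this is the claim. (If $m\neq 0$ one could additionally invoke Theorem~\ref{theorem_spectrum_general}(ii), but since every $\epsilon_j>0$ none of the points $\pm c\sqrt{m^2c^2+\epsilon_j}$ can equal $\mp mc^2$, so this is not needed.)

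The step carrying all the genuine difficulty — the threshold identification $\inf\sigma_\mathrm{ess}(-\Delta_\mathrm{D}^\Omega) = \epsilon_\infty$ for this singular, edge-dominated geometry together with the finiteness of the discrete spectrum below it, which would typically be obtained by a bracketing/Born--Oppenheimer type argument and a Weyl-sequence construction along the three edges — is not reproved here; it is exactly the content of \cite{DLO18}. The only thing one must verify by hand is the harmless identification of the constant $\epsilon_\infty$ occurring there with the ground-state energy $\epsilon_1$ of the width-$\pi$ L-shaped strip of Proposition~\ref{prop:Lshape}.
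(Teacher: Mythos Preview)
Your proposal is correct and follows exactly the paper's approach: the paper does not give a separate proof environment for this proposition but simply introduces it with the sentence ``Using the result of \cite{DLO18} we can make a claim\ldots'', i.e.\ it combines the Dirichlet-Laplacian information from \cite{DLO18} with Theorem~\ref{theorem_spectrum_general}, precisely as you do. Your added explanation of why the edge asymptotics single out the L-strip threshold $\epsilon_\infty$ and your explicit check that the map $\lambda\mapsto c\sqrt{m^2c^2+\lambda}$ transports the essential/discrete splitting are a correct elaboration of what the paper leaves implicit.
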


The isoperimetric-type inequality of Theorem~\ref{thm:optimstrip} has a three-dimensional analogue. Consider a layer of a fixed halfwidth $a$ built over a compact surface $\Sigma$ without a boundary. The essential spectrum of $H_{m,\Omega}$ consists again of a single point, the infinitely degenerate eigenvalue $mc^2$, and the rest of the spectrum is purely discrete accumulating only at $\pm\infty$. We take the family of all such layers of halfwidth $a$ satisfying assumption~\eqref{assh} and such that area of the surface $\Sigma$ is fixed, and ask about optimization of the `smallest' pair of eigenvalues, $\pm\lambda_1$.
\begin{theorem} \label{thm:optimlayer}
In this situation, $\lambda_1$ is uniquely maximized by layers built over a spherical $\Sigma$.
\end{theorem}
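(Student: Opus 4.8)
The plan is to reduce the statement, via Theorem~\ref{theorem_spectrum_general}, to a purely spectral-geometric optimization for the Dirichlet Laplacian and then invoke the known isoperimetric result for layers built over compact boundaryless surfaces. Concretely, for a layer $\Omega$ built over a compact surface $\Sigma$ of fixed area and halfwidth $a$, the operator $H_{m,\Omega}$ has essential spectrum $\{mc^2\}$ only and its discrete spectrum is the mirror-symmetric family $\{\pm\lambda_j\}$ with $\lambda_j = c\sqrt{m^2c^2+\mu_j}$, where $\mu_1\le\mu_2\le\cdots$ are the eigenvalues of $-\Delta_\mathrm{D}^\Omega$. Since $t\mapsto c\sqrt{m^2c^2+t}$ is strictly increasing on $[0,\infty)$, maximizing $\lambda_1$ over the admissible family of layers is equivalent to maximizing $\mu_1 = \mu_1(\Omega)$, the principal eigenvalue of the Dirichlet Laplacian, over the same family; and uniqueness of the maximizer transfers under this strictly monotone bijection as well.

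First I would make precise the reduction: one must check that the hypotheses (compact $\Sigma$ without boundary, fixed area, assumption~\eqref{assh} so that $\Omega$ is a genuine non-self-intersecting layer) are exactly those under which the relevant Dirichlet-Laplacian optimization result is formulated. The spectral correspondence of Theorem~\ref{theorem_spectrum_general} holds for any open $\Omega$, so the only content is the claim about $-\Delta_\mathrm{D}^\Omega$. Here $\Omega$ is a bounded open set in $\R^3$ (a tubular neighborhood of $\Sigma$), so $-\Delta_\mathrm{D}^\Omega$ has purely discrete spectrum; correspondingly $\sigma_\mathrm{ess}(H_{m,\Omega})=\{mc^2\}$ is the infinitely degenerate eigenvalue coming from $\ker\mathcal{T}_\mathrm{max}$, as in the block structure used in the proof of Theorem~\ref{theorem_spectrum_general}, and the rest of $\sigma(H_{m,\Omega})$ is discrete and accumulates only at $\pm\infty$. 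This justifies speaking of ``the smallest pair $\pm\lambda_1$''.

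Then I would cite the isoperimetric statement for layers: among all layers of fixed halfwidth built over a compact boundaryless surface of prescribed area, the principal Dirichlet eigenvalue is uniquely maximized by the layer over a round sphere. This is the three-dimensional companion of the planar annulus result of \cite{EHL99} used in Theorem~\ref{thm:optimstrip}; the proof idea is a symmetrization/rearrangement argument on the surface combined with monotonicity of $\mu_1$ under the relevant geometric operation, the point being that fixing the area and the transverse width, the spherical shell both maximizes available ``room'' in the right sense and is the unique extremal configuration. Invoking this, and transporting it through the increasing bijection $\mu_1\mapsto c\sqrt{m^2c^2+\mu_1}$, gives that $\lambda_1$ is uniquely maximized by the layer over a spherical $\Sigma$, which is the assertion.

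The main obstacle is not in the Dirac-to-Laplacian reduction, which is immediate from Theorem~\ref{theorem_spectrum_general}, but in having the precise isoperimetric inequality for $\mu_1$ of a layer over a compact surface available in the form needed, with the uniqueness clause. If a direct reference (analogous to \cite{EHL99} in the strip case, e.g. a result in \cite{EK15} on layers over compact surfaces) is at hand, the proof is a two-line reduction; if only the existence part of the optimization is known and uniqueness requires the equality-case analysis in the underlying symmetrization, that case analysis — showing the spherical layer is the \emph{only} maximizer — is the delicate point, and one would spell it out or quote it from the corresponding Laplacian literature.
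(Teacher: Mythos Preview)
Your proposal is correct and follows exactly the paper's approach: reduce via Theorem~\ref{theorem_spectrum_general} to maximizing the principal Dirichlet eigenvalue $\mu_1(\Omega)$, then invoke the known isoperimetric result for layers over compact surfaces. The reference you were looking for is in fact the same paper as in the strip case: the paper cites \cite{EHL99}, Theorem~1, part~(b), which already contains the three-dimensional statement (including uniqueness), so your worry about the equality-case analysis is taken care of by that citation.
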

\begin{proof}
What matters is again the principal eigenvalue of $-\Delta_\mathrm{D}^\Omega$, and therefore, the claim follows from Theorem~1, part (b), in \cite{EHL99}.
\end{proof}

Returning to infinite layers, consider now locally deformed ones of the type \eqref{bulgedlayer} with a compactly supported function $f$. As before a local protrusion creates bound states:
\begin{theorem} \label{thm:weaklocdef}
Let $f\ge 0$. If there is an $\eta>0$ and an open $\WW\subset\R^2$ such that $f(x)>\eta$ for $x\in\WW$, $\:\sigma_\mathrm{disc}(H_{m,\Omega}) \ne\emptyset$. Moreover, if $f$ is replaced by $\beta f$ with $f$ not necessarily positive, but such that $\langle f\rangle:= \int_{\R^2} f(x)\,\D x>0$, the operator $H_{m,\Omega}$ has for all sufficiently small $\beta>0$ just one pair of eigenvalues, $\pm\lambda_1(\beta)$, each having multiplicity two, and
 \begin{equation} \label{weaklocdef1}
\lambda_1(\beta) = c\sqrt{m^2c^2+\big(\textstyle{\frac{\pi}{d}}\big)^2 -\ee^{2w(\beta)^{-1}}},\qquad
w(\beta) = -\beta\, \frac{\pi}{d^2}\,\langle f\rangle + \OO(\beta^2).
 \end{equation}
\end{theorem}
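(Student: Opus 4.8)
The plan is to transfer everything, via Theorem~\ref{theorem_spectrum_general}, to the Dirichlet Laplacian $-\Delta_\mathrm{D}^{\Omega}$ on the bulged layer $\Omega=\Omega_f$, respectively $\Omega_{\beta f}$. Here the ambient dimension is $3$, so the multiplicity factor $r$ in Theorem~\ref{theorem_spectrum_general}(iii) equals $2$, which is the source of the asserted multiplicity two. By Theorem~\ref{thm:spectess3D}(vi) we have $\inf\sigma_\mathrm{ess}(H_{m,\Omega}) = c\sqrt{m^2c^2+(\pi/d)^2}$, hence $\inf\sigma_\mathrm{ess}(-\Delta_\mathrm{D}^{\Omega}) = (\pi/d)^2$; since $-\Delta_\mathrm{D}^{\Omega}$ is positive, every $\lambda\in\sigma_\mathrm{disc}(-\Delta_\mathrm{D}^{\Omega})$ satisfies $0<\lambda<(\pi/d)^2$ and produces, through $\lambda\mapsto c\sqrt{\lambda+(mc)^2}$, a mirror pair of points lying in the spectral gaps $(mc^2,\epsilon_\mathrm{t})$ and $(-\epsilon_\mathrm{t},mc^2)$ of $\sigma_\mathrm{ess}(H_{m,\Omega})$, hence belonging to $\sigma_\mathrm{disc}(H_{m,\Omega})$; by the spectrum formula and Theorem~\ref{theorem_spectrum_general}(iii) this correspondence is bijective, with the multiplicities multiplied by $r=2$. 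Both assertions thus reduce to the corresponding statements about $-\Delta_\mathrm{D}^{\Omega}$.

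For the first claim, with $f\ge 0$ and $f>\eta$ on an open set $\WW$, I would prove $\inf\sigma(-\Delta_\mathrm{D}^{\Omega_f})<(\pi/d)^2$ by a Goldstone--Jaffe-type variational argument. One uses a trial function of the form $\psi_{\sigma,\varepsilon}(y,z) = \chi_\sigma(y)\,\phi_1(z) + \varepsilon\, g(y,z)$, where $\phi_1(z)=\sqrt{2/d}\,\sin(\pi z/d)$ is the transverse ground state on $(0,d)$, $\chi_\sigma$ is a slowly decaying cut-off with $\chi_\sigma\to 1$ locally and $\|\nabla\chi_\sigma\|\to 0$ as $\sigma\to 0$, and $g\in C^\infty_0(\Omega_f)$ is supported in the region above $\WW$ where the layer is genuinely wider than $d$. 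In the quadratic form of $-\Delta_\mathrm{D}^{\Omega_f}-(\pi/d)^2$ the first summand contributes only $o(1)$ as $\sigma\to 0$, because $\phi_1$ saturates the transverse Dirichlet form, whereas for a suitable $g$ the cross term $2\varepsilon\langle\chi_\sigma\phi_1,(-\Delta-(\pi/d)^2)g\rangle$ is strictly negative, owing to the fact that $\phi_1$ does not vanish on the upper part of the bulge boundary; choosing first $\varepsilon$ and then $\sigma$ small makes the form negative. This is the content of the bulged-layer binding result in \cite{EK15} (the variational argument used there for the essential spectrum of \eqref{bulgedlayer} also yields the bound state), to which I would refer for the details.

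For the weak-coupling part, with $f$ replaced by $\beta f$ and $\langle f\rangle>0$, the key observation is that the longitudinal problem is effectively two-dimensional. A standard diffeomorphism straightening $\Omega_{\beta f}$ onto $\R^2\times(0,d)$ turns $-\Delta_\mathrm{D}^{\Omega_{\beta f}}$, up to a unitary Jacobian factor, into a $\beta$-dependent perturbation of $(-\Delta_{\R^2})\otimes I + I\otimes(-\Delta_\mathrm{D}^{(0,d)})$; projecting the $\OO(\beta)$ part of the perturbation onto the transverse ground state $\phi_1$ produces an effective two-dimensional Schr\"odinger operator $-\Delta_{\R^2}-\beta V+\OO(\beta^2)$ on $L^2(\R^2)$ with $\int_{\R^2}V$ a positive multiple of $\langle f\rangle$. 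The Birman--Schwinger analysis of a weak attractive well in two dimensions then gives, for $\beta$ small, exactly one simple eigenvalue below the threshold, located at $(\pi/d)^2-\ee^{2w(\beta)^{-1}}$ with $w(\beta)=-\beta\frac{\pi}{d^2}\langle f\rangle+\OO(\beta^2)$, the uniqueness and simplicity being part of the same analysis; this is the weak-coupling theorem for bulged layers in \cite[Ch.~6]{EK15}. Feeding it through the reduction of the first paragraph yields \eqref{weaklocdef1}, the multiplicity two again coming from $r=2$. As in the Remark after Theorem~\ref{thm:curvlayer}, the exponential term implicitly carries a fixed length scale $L$ normalized to one here.

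The reduction and the existence part are comparatively routine once Theorem~\ref{theorem_spectrum_general} and the variational bracketing are in place; the step I expect to be the real work is the weak-coupling asymptotics, which rests on the Birman--Schwinger machinery for exponentially shallow bound states of two-dimensional Schr\"odinger operators, together with a careful control of the $\OO(\beta^2)$ remainder produced by the straightening and the transverse projection.
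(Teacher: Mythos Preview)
Your proposal is correct and follows the same route as the paper: reduce via Theorem~\ref{theorem_spectrum_general} to the Dirichlet Laplacian on the bulged layer, then invoke the existence and weak-coupling results for $-\Delta_\mathrm{D}^{\Omega}$ from \cite{EK15} (the paper simply cites Theorems~4.5 and~6.6 there). Your outlines of the Goldstone--Jaffe variational argument and the two-dimensional Birman--Schwinger analysis are accurate descriptions of what those cited theorems contain, not a different approach.
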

\begin{proof}
The claim follows from Theorems~4.5 and~6.6\footnote{This result in \cite{EK15} contains the same misprint, an extra square of $\langle f\rangle$.} of \cite{EK15}.
\end{proof}

Let us finally consider the laterally coupled layers. The essential spectrum of them is given by Theorem~\ref{thm:spectess3D}(vii). In this case the existence of a discrete spectrum is easy to establish but the weak-coupling result is less precise than in the previous cases:

\begin{theorem} \label{thm:weaklater}
Let $\WW$ be an open bounded set and let $\Omega$ be defined by~\eqref{laterally_coupled_3D}; whenever $\WW$ is nonempty, $\:\sigma_\mathrm{disc}(H_{m,\Omega}) \ne\emptyset$. Moreover, if $\WW=\beta M$ with $M$ open and nonempty, the operator $H_{m,\Omega}$ has for all sufficiently small $\beta>0$ just one pair of eigenvalues, $\pm\lambda_1(\beta)$, each having multiplicity two, and there are positive $c_1,\,c_2$ such
 \begin{equation} \label{weaklocdef}
\ee^{-c_2\beta^{-3}} \le \epsilon_\mathrm{t}-\lambda_1(\beta) \le \ee^{-c_1\beta^{-3}}.
 \end{equation}
\end{theorem}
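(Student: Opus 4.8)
The strategy is the same reduction used throughout this section: translate the question about $H_{m,\Omega}$ into one about the Dirichlet Laplacian $-\Delta_\mathrm{D}^\Omega$ by Theorem~\ref{theorem_spectrum_general} and then invoke the relevant results of \cite{EK15}. By Theorem~\ref{thm:spectess3D}(vii) the part of $\sigma_\mathrm{ess}(H_{m,\Omega})$ in $(mc^2,\infty)$ is $[\epsilon_\mathrm{t},\infty)$ with $\epsilon_\mathrm{t}=c\sqrt{m^2c^2+(\pi/d)^2}$, and correspondingly $\inf\sigma_\mathrm{ess}(-\Delta_\mathrm{D}^\Omega)=(\pi/d)^2$. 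By domain monotonicity $-\Delta_\mathrm{D}^\Omega\ge(\pi/D)^2>0$, so any eigenvalue $\epsilon_1$ of $-\Delta_\mathrm{D}^\Omega$ lying below $(\pi/d)^2$ satisfies $\epsilon_1\in\big((\pi/D)^2,(\pi/d)^2\big)$ and, via Theorem~\ref{theorem_spectrum_general}, produces the pair $\pm c\sqrt{m^2c^2+\epsilon_1}\in\sigma_\mathrm{disc}(H_{m,\Omega})$, the mirror image being automatic. For the first assertion it therefore suffices to know that for any nonempty open window $\WW$ the operator $-\Delta_\mathrm{D}^\Omega$ has an eigenvalue below $(\pi/d)^2$; this is the content of the variational argument underlying \cite[Thm.~4.7]{EK15}, where a trial state is produced by joining, across $\WW$, a cut-off transverse ground mode of the wider of the two layers.

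For the weak-coupling regime $\WW=\beta M$ I would quote the matching window-coupling asymptotics for the Dirichlet Laplacian from \cite[Ch.~6]{EK15}: for all sufficiently small $\beta>0$ the operator $-\Delta_\mathrm{D}^\Omega$ has exactly one eigenvalue $\epsilon_1(\beta)$ below $(\pi/d)^2$, it is simple, and there are positive constants $c_1',c_2'$ with $\ee^{-c_2'\beta^{-3}}\le(\pi/d)^2-\epsilon_1(\beta)\le\ee^{-c_1'\beta^{-3}}$. Inserting this into Theorem~\ref{theorem_spectrum_general} shows that $H_{m,\Omega}$ then has precisely the pair $\pm\lambda_1(\beta)$ with $\lambda_1(\beta)=c\sqrt{m^2c^2+\epsilon_1(\beta)}$ in its discrete spectrum, and since $\epsilon_1(\beta)>0$ and is simple, part~(iii) of that theorem with $r=2$ (the value for $d=3$) gives each of $\pm\lambda_1(\beta)$ multiplicity two.

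Finally one transfers the exponential bound. Since $\epsilon_\mathrm{t}-\lambda_1(\beta)=c\big(\sqrt{m^2c^2+(\pi/d)^2}-\sqrt{m^2c^2+\epsilon_1(\beta)}\big)$ and $t\mapsto\sqrt{m^2c^2+t}$ is $C^1$ on $[0,(\pi/d)^2]$ with derivative bounded between two positive constants depending only on $m,c,d$, the mean value theorem shows that $\epsilon_\mathrm{t}-\lambda_1(\beta)$ is comparable, up to fixed positive factors, to $(\pi/d)^2-\epsilon_1(\beta)$; combined with the bounds just quoted this yields $\ee^{-c_2\beta^{-3}}\le\epsilon_\mathrm{t}-\lambda_1(\beta)\le\ee^{-c_1\beta^{-3}}$ for suitable $c_1,c_2>0$ and all small $\beta$. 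Essentially all the analytic work — in particular the precise $\beta^{-3}$ rate — is imported from \cite{EK15}; the only step specific to the Dirac setting is this elementary bi-Lipschitz comparison together with the multiplicity bookkeeping of Theorem~\ref{theorem_spectrum_general}(iii), so I do not expect any serious obstacle beyond correctly locating and citing the three-dimensional window-coupling estimate in \cite{EK15}.
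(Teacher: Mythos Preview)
Your approach is correct and coincides with the paper's: reduce to the Dirichlet Laplacian via Theorem~\ref{theorem_spectrum_general}, then import the existence result from \cite[Thm.~4.7]{EK15} and the weak-coupling $\beta^{-3}$ estimate for the window-coupled Laplacian. The only discrepancy is bibliographic: the paper cites the $\beta^{-3}$ bound from \cite[Theorem~3.1]{EV97} rather than \cite[Ch.~6]{EK15}, so your caveat about ``correctly locating'' the estimate is well placed --- use the original source \cite{EV97}. Your explicit bi-Lipschitz transfer of the bound from $(\pi/d)^2-\epsilon_1(\beta)$ to $\epsilon_\mathrm{t}-\lambda_1(\beta)$ and the multiplicity bookkeeping via Theorem~\ref{theorem_spectrum_general}(iii) are details the paper leaves implicit but are exactly what is needed.
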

\begin{proof}
The claim follows from \cite[Theorem~4.7]{EK15} and \cite[Theorem~3.1]{EV97}.
\end{proof}

\subsection*{Acknowledgements}

The research was supported by the Czech Science Foundation within the project 21-07129S and by the EU project CZ.02.1.01/0.0/0.0/16\textunderscore 019/0000778. M. Holzmann gratefully acknowledges financial support by the Austrian Science Fund (FWF): P 33568-N.


\begin{thebibliography}{99}
\setlength{\itemsep}{2pt}


\bibitem[ABLO21]{ABLO21}
P. Antunes, R. Benguria, V. Lotoreichik, T. Ourmieres-Bonafos:
A variational formulation for Dirac operators in bounded domains. Applications to spectral geometric inequalities,
\emph{Comm. Math. Phys.} \textbf{386} (2021), 781--818.
\bibitem[ALTR17]{ALTR17}
N.~Arrizabalaga, L.~Le~Treust, N.~Raymond:
On the MIT bag model in the non-relativistic limit,
\emph{Comm. Math. Phys.} \textbf{354}(2) (2017), 641--669.
\bibitem[AB92]{AB92}
M.S.~Ashbaugh, R.D.~Benguria:
A sharp bound for the ratio of the first two eigenvalues of Dirichlet Laplacians and extensions,
\emph{Ann. Math.} \textbf{135} (1992), 601-628.
\bibitem[BHM20]{BHM20}
J. Behrndt, M. Holzmann, A. Mas:
Self-adjoint Dirac operators on domains in $\mathbb{R}^3$,
\emph{Ann. Henri Poincar\'{e}} \textbf{21} (2020), 2681--2735.
\bibitem[BFSB17]{BFSB17}
R.D. Benguria, S.~Fournais, E.~Stockmeyer, H.~Van Den~Bosch:
Self-adjointness of two-dimensional {D}irac operators on domains,
\emph{Ann. Henri Poincar\'e} \textbf{18}(4) (2017), 1371--1383.
\bibitem[Bo68]{Bo68}
P.N. Bogolioubov: Sur un mod\`ele \`a quarks quasi-ind\'ependants,
\emph{Ann. Inst. H.~Poincar\'{e}} \textbf{A8} (1968), 163--168.
\bibitem[BBKO21]{BBKO21}
W. Borrelli, P. Briet, D. Krej\v{c}i\v{r}\'{\i}k, T. Ourmi\`eres-Bonafos:
Spectral properties of relativistic quantum waveguides,
\texttt{arXiv:2101.04009}.
\bibitem[BGRS97]{BGRS97}
W.~Bulla, F.~Gesztesy, W.~Renger, B.~Simon: Weakly coupled bound states in quantum waveguides, \emph{Proc. Amer. Math. Soc.} \textbf{127} (1997), 1487--1495.
\bibitem[CLMT21]{CLMT21}
B.~Cassano, V.~Lotoreichik, A.~Mas, M.~Tu\v{s}ek:
General $\delta$-shell interactions for the two-dimensional {D}irac operator: self-adjointness and approximation,
\texttt{arXiv:2102.09988}.
\bibitem[DLO18]{DLO18}
M.~Dauge, Y.~Lafranche, T.~Ourmi{\`e}res-Bonafos: Dirichlet spectrum of the Fichera layer, \emph{Integral Equations Operator Theory} \textbf{90} (2018), 60.
\bibitem[DJK75]{DJK75}
T. DeGrand, R.L. Jaffe, K. Johnson, J. Kiskis: Masses and other parameters of the light hadrons, \emph{Phys. Rev.} \textbf{D12} (1975), 2060--2076.
\bibitem[EHL99]{EHL99}
P.~Exner, E.M.~Harrell, M.~Loss: Optimal eigenvalues for some Lap\-lacians and Schr\"odinger operators depending on curvature,
in \emph{``Mathematical Results in Quantum Mechanics''}, Operator Theory: Advances and Applications, vol.~108; Birk\-h\"auser, Basel 1999; pp.~47--53.
\bibitem[EK15]{EK15}
P.~Exner, H.~Kova\v{r}\'{\i}k: \emph{Quantum Waveguides}, Springer International, Cham 2015.
\bibitem[ET10]{ET10}
P.~Exner, M.~Tater: Spectrum of Dirichlet Laplacian in a conical layer, \emph{J. Phys. A: Math. Theor.} \textbf{43} (2010), 474023.
\bibitem[EV97]{EV97}
P.~Exner, S.~A.~Vugalter: Mathematical and General Bound-state asymptotic estimates for window-coupled Dirichlet strips and layers,
\emph{J. Phys. A: Math. Gen.} \textbf{30} (1997), 7863--7878.
\bibitem[GJ92]{GJ92}
J.~Goldstone, R.L.~Jaffe: Bound states in twisting tubes, \emph{Phys. Rev. B} \textbf{45} (1992), 14100--14107.
\bibitem[GM69]{GM69}
D.~Gromoll, W.~Meyer: On complete open manifolds of positive curvature, \emph{Ann. Math.} \textbf{90} (1969), 75--90.
\bibitem[Ho21]{Ho21}
M.~Holzmann: A note on the three-dimensional Dirac operator with zigzag type boundary conditions, {\em Compl. Anal. Oper. Theory} {\bf 15} (2021), 47.
 \bibitem[Kr01]{Kr01}
D.~Krej\v{c}i\v{r}\'{\i}k: Quantum Layers, \emph{PhD thesis}, Marseille 2001.

\bibitem[LTO18]{LTO18}
L.~Le Treust, T.~Ourmi{\`e}res-Bonafos:
Self-adjointness of Dirac operators with infinite mass boundary conditions in sectors,
\emph{Ann. Henri Poincar\'e} \textbf{19}(5) (2018), 1465--1487.

\bibitem[LO18]{LO18}
V.~Lotoreichik, T.~Ourmi{\`e}res-Bonafos:
A sharp upper bound on the spectral gap for graphene quantum dots,
\emph{Math. Phys. Anal. Geom.} \textbf{22} (2019), 13.
\bibitem[NGM04]{NGM04}
K.S.~Novoselov, A.K.~Geim, S.V.~Morozov, D.~Jiang, Y.~Zhang, S.V.~Dubonos, I.V.~Grigorieva, A.A.~Firsov: Electric field effect in atomically thin carbon films, \emph{Science} \textbf{306} (2004), 666--669.
\bibitem[OV18]{OV18}
T.~Ourmi{\`e}res-Bonafos, L.~Vega:
A strategy for self-adjointness of {D}irac operators: application to the {MIT} bag model and $\delta$-shell interactions,
\emph{Publ. Mat.} \textbf{62} (2018), 397--437.
\bibitem[S95]{S95}
K.~Schmidt: A remark on boundary value problems for the {D}irac operator, {\em Quart. J. Math. Oxford Ser. (2)}{\bf  46} (1995), 509--516.
\bibitem[SRW89]{SRW89}
R.L.~Schult, D.G.~Ravenhall, H.W.~Wyld: Quantum bound states in a classically unbounded system of crossed wires, \emph{Phys. Rev.} \textbf{B39} (1989), 5476--5479.
\end{thebibliography}
\end{document}